\title[Characteristic $p$ nonabelilan Hodge theory in geometric case]{A note on the characteristic $p$ nonabelian Hodge theory in the geometric case}
\author[Mao Sheng]{Mao Sheng}
\email{msheng@ustc.edu.cn}
\address{School of Mathematical Sciences,
University of Science and Technology of China, Hefei, 230026, China}
\author[He Xin]{He Xin}
\author[Kang Zuo]{Kang Zuo}
\email{xinhe@uni-mainz.de} \email{zuok@uni-mainz.de}
\address{Institut f\"{u}r  Mathematik, Universit\"{a}t
Mainz, Mainz, 55099, Germany}
\begin{document}
\theoremstyle{plain}
\newtheorem{thm}{Theorem}[section]
\newtheorem{theorem}[thm]{Theorem}
\newtheorem{lemma}[thm]{Lemma}
\newtheorem{corollary}[thm]{Corollary}
\newtheorem{proposition}[thm]{Proposition}
\newtheorem{addendum}[thm]{Addendum}
\newtheorem{variant}[thm]{Variant}
\theoremstyle{definition}
\newtheorem{lemma and definition}[thm]{Lemma and Definition}
\newtheorem{construction}[thm]{Construction}
\newtheorem{notations}[thm]{Notations}
\newtheorem{question}[thm]{Question}
\newtheorem{problem}[thm]{Problem}
\newtheorem{remark}[thm]{Remark}
\newtheorem{remarks}[thm]{Remarks}
\newtheorem{definition}[thm]{Definition}
\newtheorem{claim}[thm]{Claim}
\newtheorem{assumption}[thm]{Assumption}
\newtheorem{assumptions}[thm]{Assumptions}
\newtheorem{properties}[thm]{Properties}
\newtheorem{example}[thm]{Example}
\newtheorem{conjecture}[thm]{Conjecture}
\newtheorem{proposition and definition}[thm]{Proposition and Definition}
\numberwithin{equation}{thm}

\newcommand{\pP}{{\mathfrak p}}
\newcommand{\sA}{{\mathcal A}}
\newcommand{\sB}{{\mathcal B}}
\newcommand{\sC}{{\mathcal C}}
\newcommand{\sD}{{\mathcal D}}
\newcommand{\sE}{{\mathcal E}}
\newcommand{\sF}{{\mathcal F}}
\newcommand{\sG}{{\mathcal G}}
\newcommand{\sH}{{\mathcal H}}
\newcommand{\sI}{{\mathcal I}}
\newcommand{\sJ}{{\mathcal J}}
\newcommand{\sK}{{\mathcal K}}
\newcommand{\sL}{{\mathcal L}}
\newcommand{\sM}{{\mathcal M}}
\newcommand{\sN}{{\mathcal N}}
\newcommand{\sO}{{\mathcal O}}
\newcommand{\sP}{{\mathcal P}}
\newcommand{\sQ}{{\mathcal Q}}
\newcommand{\sR}{{\mathcal R}}
\newcommand{\sS}{{\mathcal S}}
\newcommand{\sT}{{\mathcal T}}
\newcommand{\sU}{{\mathcal U}}
\newcommand{\sV}{{\mathcal V}}
\newcommand{\sW}{{\mathcal W}}
\newcommand{\sX}{{\mathcal X}}
\newcommand{\sY}{{\mathcal Y}}
\newcommand{\sZ}{{\mathcal Z}}
\newcommand{\A}{{\mathbb A}}
\newcommand{\B}{{\mathbb B}}
\newcommand{\C}{{\mathbb C}}
\newcommand{\D}{{\mathbb D}}
\newcommand{\E}{{\mathbb E}}
\newcommand{\F}{{\mathbb F}}
\newcommand{\G}{{\mathbb G}}
\newcommand{\HH}{{\mathbb H}}
\newcommand{\I}{{\mathbb I}}
\newcommand{\J}{{\mathbb J}}
\renewcommand{\L}{{\mathbb L}}
\newcommand{\M}{{\mathbb M}}
\newcommand{\N}{{\mathbb N}}
\renewcommand{\P}{{\mathbb P}}
\newcommand{\Q}{{\mathbb Q}}
\newcommand{\R}{{\mathbb R}}
\newcommand{\SSS}{{\mathbb S}}
\newcommand{\T}{{\mathbb T}}
\newcommand{\U}{{\mathbb U}}
\newcommand{\V}{{\mathbb V}}
\newcommand{\W}{{\mathbb W}}
\newcommand{\X}{{\mathbb X}}
\newcommand{\Y}{{\mathbb Y}}
\newcommand{\Z}{{\mathbb Z}}
\newcommand{\id}{{\rm id}}
\newcommand{\rank}{{\rm rank}}
\newcommand{\END}{{\mathbb E}{\rm nd}}
\newcommand{\End}{{\rm End}}
\newcommand{\Hom}{{\rm Hom}}
\newcommand{\Hg}{{\rm Hg}}
\newcommand{\tr}{{\rm tr}}
\newcommand{\Cor}{{\rm Cor}}
\newcommand{\GL}{\mathrm{GL}}
\newcommand{\SL}{\mathrm{SL}}
\newcommand{\Aut}{\mathrm{Aut}}
\newcommand{\Sym}{\mathrm{Sym}}
\newcommand{\DD}{\mathbf{D}}
\newcommand{\EE}{\mathbf{E}}
\newcommand{\Gal}{\mathrm{Gal}}
\newcommand{\GSp}{\mathrm{GSp}}
\newcommand{\Spf}{\mathrm{Spf}}
\newcommand{\Spec}{\mathrm{Spec}}
\newcommand{\SU}{\mathrm{SU}}
\newcommand{\Res}{\mathrm{Res}}
\newcommand{\Rep}{\mathrm{Rep}}
\newcommand{\ord}{\mathrm{ord}}
\thanks{This work is supported by the SFB/TR 45 `Periods, Moduli
Spaces and Arithmetic of Algebraic Varieties' of the DFG and
partially supported by the University of Science and Technology of
China.}
\begin{abstract}
We provide a construction of associating a de Rham subbundle to a
Higgs subbundle in characteristic $p$ in the geometric case. As
applications, we obtain a Higgs semistability result and a
$W_2$-unliftable result.
\end{abstract}

\maketitle

\section*{Introduction}
This small note grows out of our efforts to understand the
spectacular work of Ogus-Vologodsky \cite{OV} (see \cite{Sch} for
the logarithmic analogue) on the nonabelian Hodge theory in
characteristic $p$. Let $k$ be an algebraically closed field of
positive characteristic $p$, and $X$ a smooth variety over $k$ which
admits a $W_2(k)$-lifting.  The authors loc. cit. establish a
correspondence between a category of vector bundles with integrable
connections and a category of Higgs bundles over $X$, the objects of
which are subject to certain nilpotent conditions (see Theorem 2.8
loc. cit.). The whole theory is analogous to the one over complex
numbers (see \cite{Si1}). Their construction relies either on the
theory of Azumaya algebra or on a certain universal algebra $\sA$
associated to a $W_2$-lifting of $X$ on which both an integrable
connection and a Higgs field act (see \S2 loc. cit.). The
correspondence is generally complicated. However, there are two
cases where the correspondence is known to be classical: the zero
Higgs field and the geometric case. In the former case this
correspondence is reduced to a classical result of Cartier (see
Remark 2.2 loc. cit. and Theorem 5.1 \cite{Ka}), while in the latter
case the Higgs bundle corresponding to the Gau{\ss}-Manin system of
a geometric family is obtained by taking gradings of the de Rham
bundle with respect to either the Hodge filtration or the conjugate
filtration by the Katz's $p$-curvature formula (see Remark 3.19 loc.
cit. and Theorem 3.2 \cite{Ka1}). Unlike the zero Higgs field case,
an explicit construction of the converse direction in the geometric
case is still unknown. In the complex case this amounts to solving
the Hermitian-Yang-Mills equation (see \cite{Si} for the compact
case), which is transcendental in nature.

The main finding of this note is that one can profit from using the
relative Frobenius of a geometric family, which behaves like the
Hodge metric of the associated variation of Hodge structures over
$\C$ in a certain sense. Indeed, we show that one can use them to
construct a de Rham subbundle from a Higgs subbundle in the
geometric case. Throughout this note $p$ is an odd prime and $n$ is
an integer which is greater than or equal to $p-2$. Our main results
read as follows:
\begin{theorem}\label{theorem 1}
Let $X$ be a smooth scheme over $W=W(k)$ and $f: Y\to X$ a proper
smooth morphism as given in Example \ref{geometric situation}. Let
$(H,\nabla)$ (resp. $(E,\theta)$) be the associated de Rham bundle
(resp. Higgs bundle) of degree $n$ to $f$. Then to any Higgs
subbundle $(G,\theta)\subset (E,\theta)_0$ in characteristic $p$ one
can associate naturally a de Rham subbundle
$(H_{(G,\theta)},\nabla)$ of $(H,\nabla)_0$. For a subsystem of
Hodge bundles in $(E,\theta)_0$ the Cartier-Katz descent of the
associated de Rham subbundle $(H_{(G,\theta)},\nabla)$ to
$(G,\theta)$ is $(G,\theta)$ itself.
\end{theorem}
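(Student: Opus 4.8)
\emph{Strategy.} I would work over the special fibre $X_0=X\otimes_W k$, writing $F\colon X_0\to X_0'$ for its relative Frobenius and $F_{Y_0/X_0}$ for that of the family, and exploit two structures on the de Rham bundle $(H,\nabla)_0=R^nf_{0*}\Omega^\bullet_{Y_0/X_0}$. The first is the Hodge filtration $\mathrm{Fil}^\bullet$, with $\nabla(\mathrm{Fil}^p)\subset\mathrm{Fil}^{p-1}\otimes\Omega^1_{X_0}$ and $\mathrm{gr}_{\mathrm{Fil}}(H,\nabla)_0=(E,\theta)_0$. The second is the conjugate filtration $P_\bullet$: it is horizontal for $\nabla$, the $p$-curvature strictly decreases it, and the Cartier isomorphism attached to $F_{Y_0/X_0}$ together with flat base change along $F$ identifies the associated graded Higgs bundle (with Higgs field induced by the $p$-curvature) canonically with $F^*(E,\theta)_0$; this is Katz's $p$-curvature formula (\cite{Ka1}, Theorem 3.2; see also \cite{OV}, Remark 3.19). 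Since $f\colon Y\to X$ is given over $W$, the reduction $Y_0/X_0$ lifts to $W_2(k)$, so the truncations of $F_{Y_0/X_0*}\Omega^\bullet_{Y_0/X_0}$ decompose in the derived category in the manner of Deligne--Illusie; this decomposition is what will play, in the present setting, the role of the $C^\infty$ splitting furnished by the Hodge metric over $\C$. The goal is then to produce $(H_{(G,\theta)},\nabla)$ as a $\nabla$-stable subbundle of $(H,\nabla)_0$, strict for $\mathrm{Fil}^\bullet$, with Hodge-graded equal to $(G,\theta)$.

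\emph{Construction.} Given a Higgs subbundle $(G,\theta)\subset(E,\theta)_0$, I would build the lift by induction along the finite Hodge filtration. Because $\mathrm{gr}_{\mathrm{Fil}}\nabla=\theta$, the graded solvability of each step is exactly the relation $\theta(G)\subset G\otimes\Omega^1_{X_0}$, which holds by hypothesis; the obstruction to an actual $\nabla$-stable lift at each step is a class in an $H^1$ of a $\Hom$-complex between successive Hodge-graded pieces. The key point will be that these obstructions vanish: pushing $\Omega^\bullet_{Y_0/X_0}$ forward by $F_{Y_0/X_0}$ and using the Deligne--Illusie decomposition coming from the $W_2$-lift, one makes the relevant complexes quasi-isomorphic to sums of shifts of their cohomology sheaves, along which the obstruction cocycles are coboundaries once $\theta$ preserves $G$ --- with no semistability or slope hypothesis on $(G,\theta)$ entering. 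This gives existence; normalising the lift against $P_\bullet$, whose horizontality makes the normalisation unambiguous, pins down a canonical choice $(H_{(G,\theta)},\nabla)$. To see that $\nabla$ stabilises $H_{(G,\theta)}$ honestly, and not just modulo $\mathrm{Fil}$, I would pass to $\mathrm{gr}_{\mathrm{Fil}}$, where the statement is again $\theta(G)\subset G\otimes\Omega^1_{X_0}$, and lift by Nakayama along $\mathrm{Fil}^\bullet$. Each ingredient above is functorial in $(G,\theta)$, which yields the asserted naturality and the independence of the auxiliary choices.

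\emph{The descent statement.} Suppose now $(G,\theta)$ is a subsystem of Hodge bundles, $G=\bigoplus_q G^{n-q,q}$ with $G^{n-q,q}\subset R^qf_{0*}\Omega^{n-q}_{Y_0/X_0}$ and $\theta(G^{n-q,q})\subset G^{n-q-1,q+1}\otimes\Omega^1_{X_0}$. Then I expect the lift $(H_{(G,\theta)},\nabla)$ to be strict for $P_\bullet$ as well --- this is where the subsystem hypothesis is used, an arbitrary Higgs subbundle yielding only a $\nabla$-stable subbundle whose conjugate-graded may be larger --- so that $\mathrm{gr}^P(H_{(G,\theta)},\nabla)$ is a Higgs subbundle of $\mathrm{gr}^P(H,\nabla)_0=F^*(E,\theta)_0$, identified with $F^*(G,\theta)$ by the compatibility of the construction with $P_\bullet$. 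Its Frobenius descent is then $(G,\theta)$ as a bundle, with descended Higgs field $\theta$ by Katz's $p$-curvature formula applied to the sub-object; since the Cartier--Katz descent of a de Rham subbundle of $(H,\nabla)_0$ is by definition the Higgs bundle reconstructed from its conjugate filtration and $p$-curvature, this shows it equals $(G,\theta)$.

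\emph{Main obstacle.} The hard part will be the unconditional vanishing of the lifting obstructions: over $\C$ the analogous conclusion needs $(G,\theta)$ to split off as a direct summand (a slope constraint), whereas here one must extract from the relative Frobenius of $Y_0/X_0$ and the $W_2$-lifting of the family enough rigidity to force the lift to exist for every Higgs subbundle. This is also exactly the regime where the Ogus--Vologodsky correspondence must be bypassed rather than invoked, since for $n\ge p-1$ the Higgs field on $(E,\theta)_0$ has nilpotency exponent $\ge p$, so the equivalence of \cite{OV} does not apply to transport subobjects.
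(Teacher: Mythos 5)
Your strategy diverges from the paper's at the decisive point, and the divergence is fatal: the object you set out to construct --- a $\nabla$-stable subbundle of $(H,\nabla)_0$, strict for the Hodge filtration, with $\mathrm{Gr}_{Fil}$ equal to $(G,\theta)$ --- does not exist for a general Higgs subbundle, so no obstruction-vanishing argument can deliver it. Concretely, let $G\subset (E^{0,n})_0$ be a line subbundle (automatically a Higgs subbundle, even a subsystem of Hodge bundles, since $\theta$ kills $(E^{0,n})_0$). A lift $W$ with $\mathrm{Gr}_{Fil}W=G$ would be a rank-one $\nabla$-stable subbundle mapping isomorphically onto $G$ under $H_0\to H_0/Fil^1$, hence with $\deg W=\deg G$ (say after restricting to a projective curve in $X_0$); but its $p$-curvature is the restriction of the nilpotent $p$-curvature of $(H,\nabla)_0$ to a line bundle, hence zero, so by Cartier descent $W$ is a Frobenius pullback and $p\mid \deg W$. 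Since $\deg G$ need not be divisible by $p$, the lift cannot exist in general. This is consistent with Remark \ref{grading of hodge filtration} of the paper, which says explicitly that $\mathrm{Gr}_{Fil}(H_{(G,\theta)},\nabla)$ is in general \emph{not} $(G,\theta)$; the Deligne--Illusie decomposition governs Hodge--de Rham degeneration, not the splitting of $Fil^{\bullet}$ into flat subsheaves, and gives no handle on the $H^1$-obstructions you need to kill. Your own ``main obstacle'' paragraph correctly identifies this as the unproved core of the argument; the trouble is that it is not merely unproved but false.

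What the paper actually exploits is the structure your sketch never touches: the strongly divisible crystalline Frobenius $\Phi\colon F_{\hat U}^{*}H\to H$ of the object of $\mathcal{MF}^{\nabla}_{[0,n]}(X)$, not just the relative Frobenius of the scheme $Y_0/X_0$. Dividing $\Phi$ on $F_{\hat U}^{*}Fil^{i}$ by $p^{i}$ and reducing mod $p$ produces, for each Frobenius lifting on a small affine $\hat U$, an isomorphism $\tilde\Phi_{F_{\hat U}}\colon F_{U_0}^{*}E_0\to H_0$, and one \emph{defines} $H_{(G,\theta)}:=\tilde\Phi_{F_{\hat U}}\bigl(F_{U_0}^{*}G\bigr)$. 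All the work is then in Taylor-formula computations showing that the discrepancy between two Frobenius liftings, and the failure of $\tilde\Phi$ to be horizontal, are series in which $\nabla$-derivatives may be replaced by applications of $\theta$ (this uses Griffiths transversality together with $n\le p-2$ to control $p$-adic valuations); since $\theta$ preserves $G$, the local images glue and are $\nabla$-stable. The Cartier--Katz statement is then nearly formal: $\Phi/[p^{i}]$ followed by projection to $\mathrm{Gr}_{F_{con}}$ is the inverse relative Cartier isomorphism, so for a subsystem of Hodge bundles $\mathrm{Gr}_{F_{con}}H_{(G,\theta)}\cong F_{X_0}^{*}G$, which descends to $(G,\theta)$. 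I would rebuild your argument on this foundation rather than on a Hodge-filtration lifting.
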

Our construction is independent of that of Ogus-Vologodsky. It is
interesting to compare it with the inverse Cartier transform of
Ogus-Vologodsky loc. cit. in the situation of the above theorem. The
construction works as well when the base scheme is equipped with a
certain logarithmic structure or defined over $W_{n+1}=W_{n+1}(k)$.
For a Higgs subsheaf of $(E,\theta)_0$, by which we mean a
$\theta$-stable coherent subsheaf in $E_0$, the above construction
yields a de Rham subsheaf of $(H,\nabla)_0$. In the most general
form, the construction works for a Higgs subsheaf of a Higgs bundle
coming from the modulo $p$ reduction of $Gr_{Fil}(H,\nabla)$, where
$(H,Fil^{\cdot},\nabla,\Phi)$ is an object of the Faltings category
$\mathcal{MF}^{\nabla}_{[0,n]}(X)$ (see \S1). We obtain two
applications of the construction.
\begin{proposition}\label{theorem 2}
Assume that $X$ is projective over $W_{n+1}$.
\begin{itemize}
    \item [1)] For a subsystem of Hodge bundles or a Higgs subbundle with zero
Higgs field $(G,\theta)\subset (E,\theta)_0$, one has the slope
inequality $\mu(G)\leq 0$, where $\mu(G)$ is the $\mu$-slope of $G$
with respect to the restriction of an ample divisor of $X$ to $X_0$.
    \item [2)] The following
statements are true:
\begin{itemize}
    \item [i)] Let $g_0: C_0\to X_0$ be a morphism of a smooth projective curve $C_0$ to $X_0$ over $k$ which is liftable to a
morphism $g: C\to X$ over $W_{n+1}$. Then for any Higgs subbundle
$(G,\theta)\subset (E,\theta)_0$, one has $\deg(g_0^*G)\leq 0$.
    \item [ii)] Let $C$ be a smooth projective curve in $X$ over $W_{n+1}$. Then the Higgs bundle $(E,\theta)_0$ is
Higgs semistable with respect to the $\mu_{C_0}$-slope.
\end{itemize}
\end{itemize}
\end{proposition}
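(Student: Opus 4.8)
The plan is to deduce everything from one statement: for every Higgs subsheaf $(G,\theta)$ of $(E,\theta)_0$ (respectively of $g_0^*(E,\theta)_0$) one has $\mu(G)\le 0$. First I would fix the normalization $\mu\big((E,\theta)_0\big)=0$. By Katz's $p$-curvature formula (Theorem 3.2 of \cite{Ka1}, as used in Remark 3.19 of \cite{OV}) the graded of $(H,\nabla)_0$ for the conjugate filtration is a relative Frobenius pull-back of the Higgs bundle $(E,\theta)_0$ (up to a Frobenius twist of the base field), so $\deg H_0=p\deg E_0$; comparing with the Hodge-filtered graded, which is $(E,\theta)_0$ itself and hence gives $\deg H_0=\deg E_0$, forces $\deg E_0=0$ since $p>1$, and likewise $\deg(g_0^*E_0)=0$ after base change along $g$. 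Thus 2)ii) and the inequality $\mu(G)\le\mu(E_0)$ both reduce to $\mu(G)\le 0$. Moreover 1) reduces to the curve case, i.e.\ to 2)i): by a Mehta--Ramanathan restriction to a general complete intersection curve $C_0\subset X_0$ cut out by $\dim X_0-1$ general members of $|mH|$, $m\gg 0$ --- which lifts to $W_{n+1}$ together with the restriction $G|_{C_0}$ --- one has $\mu(G)\le 0$ iff $\deg(G|_{C_0})\le 0$.

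Now the core inequality. Given $(G,\theta)\subseteq(E,\theta)_0$, let $(H_{(G,\theta)},\nabla)\subseteq(H,\nabla)_0$ be the de Rham subsheaf produced by Theorem \ref{theorem 1}. Since its Cartier--Katz descent is $(G,\theta)$ itself, this subsheaf is --- at the level of the conjugate filtration --- a relative Frobenius pull-back of $(G,\theta)$; on determinants this reads $\det H_{(G,\theta)}=F^*_{X_0/k}(\det G)$ up to Frobenius twist (a nilpotent Higgs field on a line bundle being zero), so that $\mathrm{rk}\,H_{(G,\theta)}=\mathrm{rk}\,G$ and $\deg H_{(G,\theta)}=p\deg G$. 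Taking associated gradeds for the Hodge filtration induced on $H_{(G,\theta)}$, a short computation (the natural map on graded pieces has zero kernel, and degree is additive in exact sequences) shows that $(G_1,\theta_1):=\mathrm{Gr}_{Fil}(H_{(G,\theta)},\nabla)$ embeds as a Higgs subsheaf of $\mathrm{Gr}_{Fil}(H,\nabla)_0=(E,\theta)_0$ with $\mathrm{rk}\,G_1=\mathrm{rk}\,G$ and $\deg G_1=\deg H_{(G,\theta)}=p\deg G$.

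I would then play this one step against the Higgs Harder--Narasimhan filtration of $(E,\theta)_0$. Let $(G_{\max},\theta)$ be its maximal destabilizing Higgs subsheaf; by uniqueness $G_{\max}$ is stable under the $\G_m$-action coming from the Hodge grading, hence is a subsystem of Hodge bundles, so the sharp form of Theorem \ref{theorem 1} applies to it. Running the previous paragraph on $G_{\max}$ yields a Higgs subsheaf $G_1\subseteq(E,\theta)_0$ with $\mathrm{rk}\,G_1=\mathrm{rk}\,G_{\max}$ and $\mu(G_1)=p\,\mu(G_{\max})$; but $\mu(G_1)\le\mu^{\mathrm{Higgs}}_{\max}(E,\theta)_0=\mu(G_{\max})$, whence $\mu(G_{\max})\le 0$ and therefore $\mu(G)\le 0$ for every Higgs subsheaf. (Equivalently: starting from any $G$ with $\mu(G)>0$, iterate the construction to get Higgs subsheaves $G_m\subseteq(E,\theta)_0$ of constant rank and slope $p^m\mu(G)\to\infty$, contradicting $\mu^{\mathrm{Higgs}}_{\max}(E,\theta)_0<\infty$; this variant needs only $\deg H_{(G,\theta)}\ge p\deg G$.) Performing the argument over the curve $C$ for $g^*(E,\theta)_0$ gives 2)i); combined with $\deg(g_0^*E_0)=0$ it gives the $\mu_{C_0}$-Higgs-semistability of 2)ii); and 1) follows from the reduction of the first paragraph (for a Higgs subbundle with zero Higgs field one may alternatively argue directly, its de Rham subsheaf being $F^*_{X_0/k}(G)$ up to twist by Cartier descent).

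The step I expect to be the main obstacle is the numerical input $\mathrm{rk}\,H_{(G,\theta)}=\mathrm{rk}\,G$ and $\deg H_{(G,\theta)}=p\deg G$: one has to read off from the construction in Theorem \ref{theorem 1} that the conjugate-filtered graded of the de Rham subsheaf it produces is honestly the relative Frobenius pull-back of $(G,\theta)$ (so that degrees get multiplied by $p$ and ranks are preserved), and one must check that $\mathrm{Gr}_{Fil}(H_{(G,\theta)},\nabla)$, which is a priori only a Higgs subsheaf --- not a subbundle --- of $(E,\theta)_0$, has the torsion of its graded pieces accounted for with the correct sign. Granting this, the Harder--Narasimhan comparison, the normalization $\mu(E_0)=0$, and the Mehta--Ramanathan reduction are all routine.
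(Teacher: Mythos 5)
Your argument is correct in substance, and for part 1) it is essentially the paper's own proof (Proposition \ref{nonpositivity of subsystems of Hodge bundles}): the paper takes a subsystem of Hodge bundles of maximal slope and derives the contradiction $p\mu(G)\le\mu(G)$ from $Gr_{F_{con}}H_{(G,\theta)}\cong F_{X_0}^*G$ followed by passing to $Gr_{Fil}$; your Harder--Narasimhan packaging of ``take one of maximal slope'' is the same idea, and the zero-Higgs-field case via Proposition \ref{corollary on the canonical connection} is identical. Where you genuinely diverge is part 2). To handle an \emph{arbitrary} Higgs subbundle the paper does not use its own construction at all: it invokes the Ogus--Vologodsky inverse Cartier transform $C_0^{-1}$ together with the determinant formula $\det C_0^{-1}(G,\theta)\cong(F_{X}^*\det G,\nabla_{can})$ from \cite{LSZ} (Lemma \ref{degree formula}), and iterates $Gr_{Fil}\circ C_0^{-1}$ to produce Higgs subsheaves of unbounded degree. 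You instead reduce the arbitrary case to the subsystem-of-Hodge-bundles case by observing that the maximal destabilizing Higgs subsheaf of $g_0^*(E,\theta)_0$ is invariant under the $\G_m$-action rescaling $\theta$, hence graded; this keeps the entire proof inside the paper's own construction and avoids importing \cite{OV} and \cite{LSZ}, at the (modest, standard) price of existence, uniqueness and $\G_m$-invariance of the Higgs HN maximal destabilizer. Your normalization $\deg E_0=0$ from comparing the Hodge and conjugate gradings is correct and is indeed needed for the semistability statement in 2)ii); the Mehta--Ramanathan-type restriction is harmless but superfluous, since you also run the HN argument on $X_0$ directly.

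One caveat. In your second paragraph you assert, for an \emph{arbitrary} Higgs subsheaf $(G,\theta)$, that the Cartier--Katz descent of $H_{(G,\theta)}$ is $(G,\theta)$ itself. Theorem \ref{theorem 1} (i.e.\ Proposition \ref{cartier-katz descent special case}) claims this only for subsystems of Hodge bundles, and for general $G$ it fails as stated: locally $\tilde\Phi$ sends $\sum_i g_i$ to a sum $\sum_i \frac{\Phi}{[p^i]}(g_i\otimes 1)$ whose leading term for the conjugate filtration sees only one Hodge component, so the descent is an associated graded of $G$ for the filtration induced by the Hodge grading, not $G$. This does not damage your main line, which applies the descent only to $G_{\max}$, a subsystem of Hodge bundles; but your parenthetical variant ``iterate from any $G$ with $\mu(G)>0$'' does require the input $\deg H_{(G,\theta)}=p\deg G$ (or at least $\ge$) for general $G$, which is exactly the content the paper imports from \cite{LSZ} and which you would have to justify separately (e.g.\ by checking that the graded pieces of that induced filtration are torsion-free, so that degrees add up). Either drop that variant or supply this computation.
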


Let $F$ be a real quadratic field such that $p$ is inert in $F$. Let
$m\geq 3$ be an integer coprime to $p$. Let $M$ be the smooth scheme
over $W$ which represents the fine moduli functor which associates a
$W$-algebra $R$ a principally polarized abelian surface over $R$
with a real multiplication $\sO_F$ and a full symplectic level
$m$-structure. Let $S_h\subset M_{0}$ be the Hasse locus which is
known to be a $\P^1$-configuration in characteristic $p$ (see
Theorem 5.1 \cite{BG}).
\begin{proposition}\label{normal crossing case}
Let $D$ be an irreducible component in $S_h$. Let $C=\sum C_i$ (may
be empty) be a simple normal crossing divisor in $M_{0}$ such that
$D+C$ is again of simple normal crossing. If the intersection number
$D\cdot C$ is less than or equal to $p-1$, then the curve $D+C$ is
$W_2$-unliftable inside the ambient space $M_{1}$.
\end{proposition}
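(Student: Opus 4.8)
The plan is to argue by contradiction. Suppose $D+C$ lifts to a relative simple normal crossings divisor $\tilde{Z}\subset M_1$ over $W_2$; equivalently, the log smooth $k$-scheme $(M_0,D+C)$ -- that is, $M_0$ endowed with the logarithmic structure attached to the divisor $D+C$ -- lifts log smoothly to $W_2$, namely to $(M_1,\tilde{Z})$. From such a lifting I will manufacture, on the component $D\cong\P^1$, a Higgs subbundle whose degree is too large, contradicting the logarithmic form of the slope inequality of Proposition~\ref{theorem 2}.

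First I fix the geometric input. Let $A\to M$ be the universal abelian surface and $f\colon Y\to M$ the associated $(p-2)$-fold Kuga--Sato family as in Example~\ref{geometric situation}, with de Rham bundle $(H,\nabla)$ and Higgs bundle $(E,\theta)$ of degree $p-2$; thus $n=p-2$ is exactly the range in which Theorem~\ref{theorem 1} and Proposition~\ref{theorem 2} are applied. By K\"unneth, $(E,\theta)_0=Gr_{\mathrm{Fil}}(H,\nabla)_0$ contains, as a Higgs direct summand cut out by the relevant Deligne-type idempotent (defined modulo $p$ since $p$ is odd and $p-2<p$), the Higgs bundle $\Sym^{p-2}\bigl(Gr_{\mathrm{Fil}}R^1(A/M)_0\bigr)$. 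Since $k$ is algebraically closed, $\sO_F\otimes_{\Z}W(k)\cong W(k)\times W(k)$, so $R^1(A/M)$ splits over $W(k)$ into rank two $\sO_F$-eigencomponents $\sL_1,\sL_2$; as $p$ is inert these are interchanged by Frobenius, which is harmless because $\sL_i$ will be used only as a Higgs subsheaf, not as an object of the Faltings category. Writing $\ell_i=\mathrm{Fil}^1(\sL_i)_0$ and using $(\sL_i)_0/\mathrm{Fil}^1\cong\ell_i^{-1}$ as line bundles (the Tate twist being invisible here), $\Sym^{p-2}(\sL_i)_0=\bigoplus_{j=0}^{p-2}\ell_i^{\,p-2-2j}$ is a graded $\theta$-stable subbundle of $(E,\theta)_0$, the Higgs field lowering the exponent by $2$; consequently its $\theta$-stable subbundles are the graded summands $G_{j_0}:=\bigoplus_{j\ge j_0}\ell_i^{\,p-2-2j}$ for $0\le j_0\le p-2$, each a Higgs subbundle of $(E,\theta)_0$.

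Next I restrict to $D$ and invoke the logarithmic slope inequality. Passing to a smooth projective model of $M$ if necessary -- its boundary is disjoint from a neighbourhood of $D$ and of $C$ and so plays no role -- the inclusion of log schemes $(D,D\cap C)\hookrightarrow(M_0,D+C)$ lifts to $W_2$ by restricting $(M_1,\tilde{Z})$, and $(D,D\cap C)\cong\bigl(\P^1,\ D\cdot C\ \text{points}\bigr)$. The logarithmic form of Proposition~\ref{theorem 2}, applied to this log curve, then yields for every rank one Higgs subbundle $(G,\theta)$ of $(E,\theta)_0|_D$ a bound
\[
\deg(G|_D)\ \le\ \deg\Omega^1_D\bigl(\log(D\cap C)\bigr)\ =\ D\cdot C-2 .
\]
It is here that the hypothesis $D\cdot C\le p-1$ enters: it keeps the logarithmic structure on $D$ within the range $D\cdot C-2\le p-3$ in which the logarithmic Cartier--Katz descent of \S1, hence the bound above, remain valid, and it is also what makes the bound fail below.

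Finally I produce the contradicting subbundle. By \cite{BG}, each component $D$ of the Hasse locus is a $\P^1$ on which one of the two Hodge line bundles has degree $-1$ (and the other degree $p$): this follows from the interaction of the two partial Hasse invariants -- $h_i|_D$ being a nonzero section of $(\ell_i^{-1}\otimes\ell_{i'}^{\,p})|_D$ when $D$ lies in the zero divisor of $h_{i'}$ -- with the Kodaira--Spencer isomorphism $\Omega^1_{M_0}\cong\ell_1^{\otimes 2}\oplus\ell_2^{\otimes 2}$ away from the boundary. Take $i$ with $\deg(\ell_i|_D)=-1$. Then the top piece $G:=G_{p-2}=\ell_i^{\,-(p-2)}|_D\cong\sO_D(p-2)$ is a Higgs subbundle of rank one with zero Higgs field -- the terminal graded piece of the $\theta$-flow, a subsystem of Hodge bundles -- and $\deg(G|_D)=p-2>D\cdot C-2$ by hypothesis, so the displayed bound fails; hence no lift $\tilde{Z}$ exists and $D+C$ is $W_2$-unliftable inside $M_1$. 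The principal obstacle is the third step: formulating the logarithmic refinement of Theorem~\ref{theorem 1} and Proposition~\ref{theorem 2} over $(M_0,D+C)$ with the Kuga--Sato Higgs bundle, and verifying that the slope bound holds in, and only in, the range $D\cdot C\le p-1$; by comparison the remaining ingredients -- the negativity $\deg(\ell_i|_D)=-1$ from \cite{BG} and the partial Hasse invariants, the Kodaira--Spencer identification, and the degree bookkeeping for $\Sym^{p-2}\sL_i$ -- are routine.
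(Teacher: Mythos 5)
Your proof has a genuine gap at its central step. The inequality $\deg(G|_D)\le \deg\Omega^1_D(\log(D\cap C))=D\cdot C-2$ that you call ``the logarithmic form of Proposition~\ref{theorem 2}'' is neither proved in the paper nor a formal consequence of anything in it: Proposition~\ref{theorem 2} gives $\mu(G)\le 0$ (respectively $\deg g_0^*G\le 0$), with no log-canonical correction term, and you supply no mechanism by which a $W_2$-lift of $D+C$ would yield a bound of the shape $D\cdot C-2$ rather than $0$. Since $\sL_2|_D\cong\sO_{\P^1}(p)$ is already a rank-one Higgs subbundle with zero Higgs field violating any such bound, the inequality cannot be an unconditional statement about logarithmic Higgs bundles on $(\P^1,\,D\cdot C\ \text{points})$; it would have to be extracted from the lift of $D+C$, and that extraction --- which is where all the content of the proposition lies --- is precisely the step you flag as ``the principal obstacle'' and leave unproved. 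A second, independent gap is the reduction to the component $D$: a semistable $W_2$-lift $Z$ of the nodal curve $Z_0=D+C$ has local model $W_2[x,y]/(xy-p)$ at the nodes, so $D$ need not lift to a closed subscheme of $Z$, and the log curve $(D,D\cap C)$ need not lift to $W_2$ ``by restricting'' $Z$.

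The paper avoids both problems by never leaving the nodal curve. It works with the weight-one Higgs bundle of the universal abelian surface itself (no Kuga--Sato power or $\Sym^{p-2}$ is needed; note also that the $(p-2)$-fold fibre power of an abelian surface has relative dimension $2(p-2)>p-2$, so Example~\ref{geometric situation} does not apply to it as stated, and the positive-degree line bundle $\sL_2|_D$ is already available in weight one). It glues the zero sheaf on $C$ with $\sL_2|_D\otimes\sO_D(-D\cap C)$ into a rank-one Higgs subsheaf $(\sL,0)$ of the logarithmic Higgs bundle over all of $Z_0$, and applies the semistable-case construction of \S2 --- this is where the $W_2$-lift is used --- to produce a de Rham subsheaf $H_{(\sL,0)}$ whose restriction to $D$ is $F_D^*(\sL_2|_D\otimes\sO_D(-D\cap C))$, of degree $p(p-D\cdot C)$. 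The hypothesis $D\cdot C\le p-1$ then enters only through elementary degree bookkeeping on $\P^1$: it forces $H_{(\sL,0)}|_D$ into $Fil^1=\sL_1|_D\oplus\sL_2|_D$ and, when $D\cdot C\le p-2$, gives an immediate contradiction, while the borderline case $D\cdot C=p-1$ is excluded by the semilinearity of the relative Frobenius, which sends $\sL_2|_D$ into $\sL_1|_D$. It does not enter through any restriction on the range of validity of the logarithmic Cartier--Katz descent, which is the (incorrect) role you assign to it.
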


As a convention, we denote the reduction modulo $p^{i+1}, i\geq 0$
of an object by attaching the subscript $i$. However for a
connection or a Higgs field this rule will not be strictly followed
for simplicity of notation.

\section{The category $\mathcal{MF}_{[0,n]}^{\nabla}(X)$}\label{section on faltings
category} In his study of $p$-adic comparison over a geometric base,
Faltings has introduced the category
$\mathcal{MF}_{[0,n]}^{\nabla}(X)$ in various settings. Its objects
are the strong divisible filtered Frobenius crystals over $X$, which
could be considered as the $p$-adic analogue of a variation of
$\Z$-Hodge structures over a complex algebraic manifold. One shall
be also aware of the fact that Ogus has developed systematically the
category of $F-T$-crystals in the book \cite{O}, which is closely
related to the category $\mathcal{MF}_{[0,n]}^{\nabla}(X)$ (see
particularly \S5.3 loc. cit.).
\subsection{Smooth case}
Let $X$ be a smooth $W$-schme. A small affine subset $U$ of $X$ is
an open affine subscheme $U\subset X$ over $W$ which is \'{e}tale
over $\A_W^d$ \footnote{A small affine open subset in the sense of
Faltings in the $p$-adic comparison in the $p$-adic Hodge theory is
required to be \'{e}tale over $\G_m^d$. Since nowhere in this note
the $p$-adic comparison is used, it suffices to take the above
notion for a small affine subset.}. As $X$ is smooth over $W$ there
exists an open covering consisting of small affine subsets of $X$.
Let $U\subset X$ be a small affine subset. For it one could choose a
Frobenius lifting $F_{\hat U}$ on $\hat U$, the $p$-adic completion
of $U$. An object in $\mathcal{MF}_{[0,n]}^{\nabla}(\hat U)$ (see
Ch. II \cite{Fa}, \S3 \cite{Fa1}) is a quadruple $(H, Fil^{\cdot},
\nabla, \Phi_{F_{\hat U}})$, where
\begin{itemize}
    \item [i)] $(H,Fil^{\cdot})$ is a filtered free $\sO_{\hat U}$-module
    with a basis $e_i$ of $Fil^i, 0\leq i\leq n$ .
    \item [ii)] $\nabla$ is an integrable connection on $H$ satisfying the Griffiths
    transversality:
    $$
    \nabla(Fil^i)\subset Fil^{i-1} \otimes \Omega^1_{\hat U}.
    $$
    \item [iii)] The relative Frobenius is an $\sO_{\hat U}$-linear morphism $\Phi_{F_{\hat U}}: F_{\hat U}^*H\to H$ with the strong
    $p$-divisible property: $\Phi_{F_{\hat U}}(F_{\hat U}^*Fil^i)\subset p^iH$
    and
    $$\sum_{i=0}^{n}\frac{\Phi_{F_{\hat U}}(F_{\hat U}^*Fil^i)}{p^i}=H.$$
    \item [iv)] The relative Frobenius $\Phi_{F_{\hat U}}$ is horizontal
    with respect to the connection $F_{\hat U}^*\nabla$ on $F_{\hat U}^*H$ and
    $\nabla$ on $H$.
\end{itemize}
The filtered-freeness in i) means that the filtration $Fil^{\cdot}$
on $H$ has a splitting such that each $Fil^i$ is a direct sum of
several copies of $\sO_{\hat U}$. The pull-back connection $F_{\hat
U}^*\nabla$ on $F_{\hat U}^*H$ is the composite
\begin{eqnarray*}
   F_{\hat U}^*H=F_{\hat U}^{-1}H\otimes_{F_{\hat U}^{-1}\sO_{\hat U}}\sO_{\hat
U} &\stackrel{F_{\hat U}^{-1}\nabla\otimes id}{\longrightarrow}&
 (F_{\hat U}^{-1}H\otimes F_{\hat U}^{-1}\Omega^1_{\hat
U})\otimes_{F_{\hat U}^{-1}\sO_{\hat U}}\sO_{\hat U} \\
    &=&  F_{\hat U}^*H\otimes F_{\hat U}^*\Omega^1_{\hat U} \stackrel{id\otimes
dF_{\hat U}}{\longrightarrow}F_{\hat U}^*H\otimes \Omega^1_{\hat U}.
\end{eqnarray*}
The horizontal condition iv) is expressed by the commutativity of
the diagram
$$
 \xymatrix{
    F_{\hat U}^*H \ar[d]_{F_{\hat U}^*\nabla} \ar[r]^{\Phi_{F_{\hat U}}} &  H \ar[d]^{\nabla} \\
    F_{\hat U}^*H\otimes
\Omega^1_{\hat U}\ar[r]^{\Phi_{F_{\hat U}}\otimes id} &  H\otimes
\Omega^1_{\hat U}. }
$$
As there is no canonical Frobenius liftings on $\hat U$, one must
know how the relative Frobenius changes under another Frobenius
lifting. This is expressed by a Taylor formula. Let $\hat U=\Spf R$
and $F: R\to R$ a Frobenius lifting. Choose a system of \'{e}tale
local coordinates $\{t_1,\cdots,t_d\}$ of $U$ (namely fix an
\'{e}tale map $U\to \Spec(W[t_1,\cdots,t_d])$). Let $R'$ be any
$p$-adically complete, $p$-torsion free $W$-algebra, equipped with a
Frobenius lifting $F': R'\to R'$ and a morphism of $W$-algebras
$\iota: R\to R'$. Then the relative Frobenius $\Phi_{F'}:
F'^*(\iota^*H)\to \iota^*H$ is the composite
$$
F'^*\iota^*H\stackrel{\alpha}{\cong}
\iota^*F^*H\stackrel{\iota^*\Phi_{F}}{\longrightarrow} \iota^*H,
$$
where the isomorphism $\alpha$ is given by the formula:
$$
\alpha(e\otimes 1)=\sum_{\underline i}\nabla_{\partial}^{\underline
i}(e)\otimes \frac{z^{\underline i}}{\underline{i}!}.
$$
Here $\underline{i}=(i_1,\cdots,i_d)$ is a multi-index, and
$z^{\underline i}=z_1^{i_1}\cdots z_d^{i_d}$ with $z_i=F'\circ
\iota(t_i)-\iota\circ F(t_i), 1\leq i\leq d$, and
$\nabla_{\partial}^{\underline j}=\nabla_{\partial_{
t_1}}^{i_1}\cdots\nabla_{\partial_{t_d}}^{i_d}$.

One defines then the category $\mathcal{MF}_{[0,n]}^{\nabla}(X)$
(see Theorem 2.3 \cite{Fa}). Its object will be denoted again by a
quadruple $(H,Fil^{\cdot},\nabla,\Phi)$. Here $(H,Fil^\cdot,\nabla)$
is a locally filtered free $\sO_X$-module with an integrable
connection satisfying the Griffiths transversality. For each small
affine $U\subset X$ and each choice $F_{\hat U}$ of Frobenius
liftings on $\hat U$, $\Phi$ defines $\Phi_{F_{\hat U}}: F_{\hat
U}^*H|_{\hat U}\to H|_{\hat U}$ such that it, together with the
restriction of $(H,Fil^\cdot,\nabla)$ to $\hat U$, defines an object
in $\mathcal{MF}_{[0,n]}^{\nabla}(\hat U)$.
\begin{example}\label{geometric situation}
Let $f: Y\to X$ be a proper smooth morphism of relative dimension
$n\leq p-2$ between smooth
    $W$-schemes. Assume that the relative Hodge cohomologies $R^if_*\Omega^j_Y, i+j=n$ has no torsion. By Theorem 6.2
    \cite{Fa} \footnote{Faltings loc. cit considered only the $p$-torsion
objects. One obtains this by
    passing to the $p$-adic limit or applying another result of Faltings (see Remark pp. 124 \cite{Fa1}).}, the crystalline direct image $R^nf_{*}(\sO_{Y},d)$ is
    an object in $\mathcal{MF}^{\nabla}_{[0,n]}(X)$.
\end{example}
\subsection{Logarithmic case}
The category $\mathcal{MF}_{[0,n]}^{\nabla}(X)$ has a logarithmic
variant (see IV c) \cite{Fa}, \S4 (f) \cite{Fa90}, and \S3
\cite{Fa1}). A generalization of the Faltings category to a syntomic
fine logarithmic scheme over $W$ can be found in \S2 \cite{T}. We
shall focus only on two special cases: the case of 'having a divisor
at infinity' and the semistable case. In the first case, $X$ is
assumed to be a smooth scheme over $W$, and $D\subset X$ a divisor
with simple normal crossings relative to $W$, i.e. $D=\cup_{i}D_i$
is the union of smooth $W$-schemes $D_i$ meeting transversally. In
the second case, $X$ is assumed to be a regular scheme over $W$ such
that \emph{Zariski} locally there is an \'{e}tale morphism to the
affine space $\A_W^d$ or $\Spec(W[t_1,\cdots,t_{d+1}]/(t_1\cdots
t_{d+1}-p))$ over $W$. We call an open affine subset $U\subset X$
small if there is an \'{e}tale morphism $U\to \A^d_W$ mapping $U\cap
D$ to a union of coordinate hyperplanes (may be empty) in the first
case, and if $U$ satisfies one of the two conditions in the second
case. In each case one associates a natural fine logarithmic
structure to the scheme $X$ such that the structural morphism $X\to
\Spec W$ is log smooth (in the former case one equips $\Spec W$ with
the trivial log structure and in the latter case with the log
structure determined by the closed point of $\Spec W$). See (1.5)
(1) and Examples (3.7) (2) \cite{K}. Note also that the assumptions
made above use the Zariski topology on $X$ instead of the the
\'{e}tale topology as in \cite{K}. The logarithmic crystalline site
is then defined for $X\to \Spec W$ with the above logarithmic
structure (see \S5 loc. cit.).

Let $X\to \Spec W$ be as above. Compared with the definition of
$\mathcal{MF}_{[0,n]}^{\nabla}(X)$ for the smooth case, we shall
take the following modifications for the logarithmic analogue. In
the first case, for a small affine open subset $U\subset X$ a
Frobenius lifting on $\hat U$ shall respect the divisor
$\widehat{U\cap D}\subset \hat U$ (called a logarithmic Frobenius
lifting by Faltings), and $\nabla$ is an logarithmic integrable
connection
$$
\nabla(Fil^i)\subset Fil^{i-1}\otimes \Omega^1_{\hat U}(\log
\widehat{U\cap D}).
$$
In the second case, for an affine open subset $U\subset X$ which
meets the singularities of $X_0$ it is \emph{necessary} to consider
a closed $W$-embedding $i: U\hookrightarrow Z$ in the category of
logarithmic schemes together with a logarithmic Frobenius lifting on
$Z$, by which we mean a Frobenius lifting respecting the logarithmic
structure. In the current special case $Z$ can be chosen to be
smooth over $W$. Write $J$ for the PD-ideal of $i$ and
$D^{\log}_U(Z)$ the logarithmic PD-envelope of $U$ in $Z$ (see
Proposition 5.3 \cite{K}). Denote by $\widehat{D^{\log}_U(Z)}$ the
$p$-adic completion of $D^{\log}_U(Z)$. Then $H$ is a free
$\sO_{\widehat{D^{\log}_U(Z)}}$-module and the decreasing filtration
$Fil^{\cdot}$ on $H$ is compatible with the PD-filtration
$J^{[\cdot]}$ on $\sO_{\widehat{D^{\log}_U(Z)}}$ and is filtered
free (see Page 119 \cite{Fa1}). For the formal logarithmic scheme
$\widehat{D^{\log}_U(Z)}$ let $\Omega^1_{\widehat{D^{\log}_U(Z)}}$
be the sheaf of the formal relative logarithmic differentials on
$\widehat{D^{\log}_U(Z)}$ (see (1.7) \cite{K}). For a choice of a
logarithmic Frobenius lifting $F_Z$ on $Z$ let
$F_{\widehat{D^{\log}_U(Z)}}$ be the induced morphism on
$\widehat{D^{\log}_U(Z)}$. Then by replacing $\hat U$ in the
definition of $\mathcal{MF}^{\nabla}_{[0,n]}(\hat U)$ in \S1 with
$\widehat{D^{\log}_U(Z)}$ we get the description of the local
category $\mathcal{MF}^{\nabla}_{[0,n]}(\widehat{D^{\log}_U(Z)})$.
Taking a small affine covering $\sU=\{U\}$ of $X$ and a family of
closed embeddings $i: U\to Z$ in the second case, one defines the
global category $\mathcal{MF}_{[0,n]}^{\nabla}(X)$ (see \S4 (f)
\cite{Fa90}, \S2 \cite{T}).

One basic example of objects in the category
$\mathcal{MF}_{[0,n]}^{\nabla}(X)$ is provided by the result of
Faltings (Theorem 6.2 \cite{Fa}, Remark \S3, page 124 \cite{Fa1}):
For a $W$-morphism $f: Y\to X$ which is proper, log-smooth and
generically smooth at infinity, if the relative Hodge cohomology
$R^if_{*}\Omega^j_{Y,\log},i+j=n$ has no torsion, then the direct
image $R^nf_*(\sO_{Y},d)$ of the constant filtered Frobenius
logarithmic crystal of $Y$ is an object in the category
$\mathcal{MF}_{[0,n]}^{\nabla}(X)$.

One needs also a logarithmic version of the Taylor formula for the
same purpose as in the smooth case. For that we refer the reader to
the formula (6.7.1) in \cite{K}. In the semistable case we make it
more explicitly as follows. For $U=\Spec R$ \'{e}tale over $\Spec
W[t_1,\cdots,t_{d+1}]/(\prod_{1\leq i\leq d+1}t_i-p)$, choose a
surjection $R'\twoheadrightarrow R$ of $W$-algebras with $R'$ log
smooth over $W$ and a logarithmic Frobenius lifting $F'$ on the
$p$-adic completion $\hat {R'}$. Assume $\{d\log x_1,\cdots,d\log
x_{r}\}$ forms a basis for $\Omega^{1}_{\log}(R')$. For another
choice $R''\twoheadrightarrow R$ with the following commutative
diagram
$$
 \xymatrix{
  R' \ar[d]_{\iota} \ar[r]^{} & R        \\
  R'' \ar[ur]_{}                     },
$$
and $F'': \hat{R''}\to \hat {R''}$ a logarithmic Frobenius lifting,
we let $$u_i=F''\circ \iota(x_i)/\iota\circ F'(x_i), 1\leq i\leq r$$
and $\nabla^{\log}_{\partial_{ x_i}}$ be the differential operator
defined in Theorem 6.2 (iii) \cite{K}. Then $\alpha:
F''^*(\iota^*H)\to \iota^*F'^*H$ given by the Taylor formula
$$
\alpha(e\otimes
1)=\sum_{\underline{i}=(i_1,\cdots,i_l,\cdots,i_{r})\in
\N^{r}}(\prod_{1\leq i\leq r,0\leq j\leq
i_{l}}(\nabla^{\log}_{\partial_{x_i}}-j)(e))\otimes (\prod_{1\leq
i\leq r}\frac{(u_i-1)^{i_l}}{i_l!})
$$
is an isomorphism.

\begin{remark}
The analogue of the category $\mathcal{MF}^{\nabla}_{[0,n]}(X)$
exists when $X$ is smooth over a truncated Witt ring. In this case
$H$ is of $p$-torsion. So the formulation of strong divisibility as
stated in iii) has to be modified. See \S2 c)-d) \cite{Fa}. In the
logarithmic case one finds in \S2.3 \cite{T} the corresponding
modification. Other conditions of the category can be obtained by
taking reduction directly. In the following we shall abuse the
notions of $\mathcal{MF}^{\nabla}_{[0,n]}(X)$ in the case of (log)
smooth $X$ over $W$ for the corresponding category in the case of
(log) smooth over a truncated Witt ring.
\end{remark}
\section{The construction}\label{section on construction}
Let $X$ be a smooth scheme over $W$ and
$(H,Fil^{\cdot},\nabla,\Phi)$ an object in
$\mathcal{MF}^{\nabla}_{[0,n]}(X)$. Let
$(E,\theta)=Gr_{Fil}(H,\nabla)$ be the associated Higgs bundle and
$(G,\theta)\subset (E,\theta)_0$ a Higgs subbundle. We start with a
description of a construction of the de Rham subbundle
$(H_{(G,\theta)},\nabla)\subset (H,\nabla)_0$. We first notice that
there is a natural isomorphism of $\sO_{X_i}$-modules:
$$\frac{1}{[p^i]}: p^iH/p^{i+1}H\to H_0.$$ This follows from the
snake lemma applied to the commutative diagram of
$\sO_{X_i}$-modules:
$$
\xymatrix{ 0\ar[r]&pH/p^{i+1}H\ar@{=}[d]\ar[r]&H/p^{i+1}H\ar@{=}[d]\ar[r]^{p^i}&p^iH/p^{i+1}H\ar[r]&0 &\\
0\ar[r]&pH/p^{i+1}H\ar[r]&H/p^{i+1}H\ar[r]&H/pH\ar[u]_{p^i}\ar[r]&0&
}
$$

\subsection{Local construction}
Take $U\in \sU$ and a Frobenius lifting $F_{\hat U}: \hat U\to \hat
U$. So we get an object $(H_U,Fil^{\cdot}_U,\nabla_U,\Phi_{F_{\hat
U}})\in \mathcal{MF}^{\nabla}_{[0,n]}(\hat U)$ by the restriction.
For simplicity of notation, we omit the appearance of $U$ in this
paragraph. Consider the composite
$$
\frac{\Phi_{F_{\hat U}}}{[p^i]}: F_{\hat U}^*Fil^iH
\stackrel{\Phi_{F_{\hat U}}}{\longrightarrow}p^iH\twoheadrightarrow
p^iH/p^{i+1}H\stackrel{\frac{1}{[p^i]}}{\longrightarrow}H_0.
$$
By the property that $\Phi_{F_{\hat U}}(F_{\hat
U}^*Fil^{i+1})\subset p^{i+1}H$ the above map factors through the
quotient
$$
F_{\hat U}^*Fil^iH\twoheadrightarrow F_{\hat U}^*Fil^iH/F_{\hat
U}^*(Fil^{i+1}H+pFil^iH).
$$
By the filtered-freeness in i) one has $Fil^{i+1}H\cap
pFil^iH=pFil^{i+1}H$. So one obtains an isomorphism
$$
Fil^iH/Fil^{i+1}H+pFil^iH\cong E^{i,n-i}/pE^{i,n-i},
$$
hence an $\sO_{U_0}$-morphism
$$
\frac{\Phi_{F_{\hat U}}}{[p^i]}: F_{U_0}^*(E^{i,n-i})_0\to H_0.
$$
It follows from the strong $p$-divisibility (see \S1 iii)) that
the map
$$
\tilde{\Phi}_{F_{\hat U}}:=\sum_{i=0}^{n}\frac{\Phi_{F_{\hat
U}}}{[p^i]}: F_{U_0}^*E_0\to H_0
$$
is an isomorphism. For another choice of Frobenius lifting $F'_{\hat
U}$ over $\hat U$, write $z_i:=F_{\hat U}(t_i)-F'_{\hat U}(t_i)$. We
have the following
\begin{lemma}\label{taylor formula using higgs field over the same open
set} For a multi-index $\underline{j}=(j_1,\cdots,j_d)$, write
$|\underline j|=\sum_{l=1}^dj_l$ and $\theta_{\partial}^{\underline
j}=\theta_{\partial_{
t_1}}^{j_1}\cdots\theta_{\partial_{t_d}}^{j_d}$. Then for a local
section $e\in (E^{i,n-i})_0(U_0)$, one has the formula
$$
\frac{\Phi_{F_{\hat U}}}{[p^i]}(e\otimes 1)-\frac{\Phi_{F'_{\hat
U}}}{[p^i]}(e\otimes 1)=\sum_{|\underline
j|=1}^{i}\frac{\Phi_{F'_{\hat U}}}{[p^{i-|\underline
j|}]}(\theta_{\partial}^{\underline j}(e)\otimes
1)\otimes\frac{z^{\underline j}}{p^{|\underline j|}\underline{j}!}
$$
\end{lemma}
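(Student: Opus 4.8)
The statement is a ``change of Frobenius lifting'' formula, so the natural strategy is to feed the Taylor comparison isomorphism $\alpha$ recalled in \S1 into the definition of $\frac{\Phi_{F_{\hat U}}}{[p^i]}$ and track what happens modulo $p^{i+1}$. Concretely, I would work on $\hat U$ with the two Frobenius liftings $F_{\hat U}, F'_{\hat U}$ and the associated $z_i = F_{\hat U}(t_i) - F'_{\hat U}(t_i) \in p\sO_{\hat U}$. The relation between the two relative Frobenii is $\Phi_{F_{\hat U}} = \Phi_{F'_{\hat U}} \circ \alpha$, where (taking $R' = R'' = R$, $\iota = \mathrm{id}$, $z^{\underline j} = (F_{\hat U}(t_j) - F'_{\hat U}(t_j))$ in the formula of \S1)
$$
\alpha(e \otimes 1) = \sum_{\underline j} \nabla_\partial^{\underline j}(e) \otimes \frac{z^{\underline j}}{\underline j!}.
$$
So for a lift $\tilde e \in Fil^i H$ of $e \in (E^{i,n-i})_0$ one gets $\Phi_{F_{\hat U}}(\tilde e \otimes 1) = \sum_{\underline j} \Phi_{F'_{\hat U}}(\nabla_\partial^{\underline j}(\tilde e) \otimes 1)\cdot \frac{z^{\underline j}}{\underline j!}$, and the task is to divide by $p^i$ and reduce mod $p$.

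\textbf{Key steps.} First I would record the valuations: $z^{\underline j} \in p^{|\underline j|}\sO_{\hat U}$, so $\frac{z^{\underline j}}{\underline j!}$ has $p$-adic valuation at least $|\underline j|$ (the $\underline j!$ is a unit once $|\underline j| \le n \le p-2$, but in any case it does not decrease valuation). Second, by Griffiths transversality $\nabla_\partial^{\underline j}(\tilde e) \in Fil^{i - |\underline j|}H$ (interpreted as $Fil^0$, i.e.\ all of $H$, when $|\underline j| > i$), hence $\Phi_{F'_{\hat U}}(\nabla_\partial^{\underline j}(\tilde e)\otimes 1) \in p^{\max(i-|\underline j|,0)}H$ by strong $p$-divisibility. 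Therefore the $\underline j$-term of $\Phi_{F_{\hat U}}(\tilde e\otimes 1)$ lies in $p^{\max(i-|\underline j|,0) + |\underline j|}H \subset p^i H$, consistent with $\Phi_{F_{\hat U}}(\tilde e \otimes 1) \in p^i H$, and — crucially — for $|\underline j| > i$ it lies in $p^{2|\underline j| - i}H \subset p^{i+1}H$ (using $|\underline j| \ge i+1$), so those terms die after reduction mod $p^{i+1}$. Third, I divide the surviving terms ($0 \le |\underline j| \le i$) by $p^i$: the $\underline j$-term becomes $\frac{1}{[p^{i-|\underline j|}]}\Phi_{F'_{\hat U}}(\nabla_\partial^{\underline j}(\tilde e)\otimes 1) \otimes \frac{z^{\underline j}}{p^{|\underline j|}\underline j!}$, and since $\nabla_\partial^{\underline j}(\tilde e)$ reduces mod $(Fil^{i-|\underline j|+1} + pFil^{i-|\underline j|})$ to $\theta_\partial^{\underline j}(e) \in (E^{i-|\underline j|, n-i+|\underline j|})_0$ (this is exactly how $\theta = Gr_{Fil}\nabla$ is defined, and the operators $\nabla_\partial^{\underline j}$ and $\theta_\partial^{\underline j}$ agree on associated graded because each factor drops the filtration by one), I obtain $\frac{\Phi_{F'_{\hat U}}}{[p^{i-|\underline j|}]}(\theta_\partial^{\underline j}(e)\otimes 1) \otimes \frac{z^{\underline j}}{p^{|\underline j|}\underline j!}$. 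The $|\underline j| = 0$ term is precisely $\frac{\Phi_{F'_{\hat U}}}{[p^i]}(e\otimes 1)$, so moving it to the left-hand side yields the claimed identity.

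\textbf{Main obstacle.} The routine parts are the valuation bookkeeping and the sign/index manipulations; the one point needing genuine care is the well-definedness and compatibility of the reductions — i.e.\ checking that $\frac{\Phi_{F'_{\hat U}}}{[p^{i-|\underline j|}]}(\theta_\partial^{\underline j}(e)\otimes 1)$ is independent of the chosen lift $\tilde e$ and of the representative in the filtration quotient, and that the isomorphism $Fil^kH/(Fil^{k+1}H + pFil^kH) \cong E^{k,n-k}/pE^{k,n-k}$ established in \S2.2 intertwines $\nabla_\partial^{\underline j} \bmod p$ with $\theta_\partial^{\underline j}$. This amounts to observing that changing $\tilde e$ by an element of $Fil^{i+1}H$ changes $\nabla_\partial^{\underline j}(\tilde e)$ by an element of $Fil^{i+1-|\underline j|}H$, which after applying $\frac{\Phi_{F'_{\hat U}}}{[p^{i-|\underline j|}]}$ lands in $pH$ and hence vanishes in $H_0$; changing $\tilde e$ by $p\, Fil^iH$ similarly contributes a multiple of $p$. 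Once this is nailed down the formula follows by assembling the pieces, and the constraint $n \le p-2$ guarantees the factorials appearing are all $p$-adic units so that no denominators are lost.
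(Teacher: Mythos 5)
Your argument is the paper's own proof: lift $e$ to $\tilde e\in Fil^iH$, expand $\Phi_{F_{\hat U}}(\tilde e\otimes 1)$ by the Taylor formula with $R'=R$ and $\iota=\mathrm{id}$, discard the terms with $|\underline j|>i$ modulo $p^{i+1}H$ by combining the valuation count with Griffiths transversality and strong divisibility, divide by $p^i$, and pass from $\nabla_{\partial}^{\underline j}$ to $\theta_{\partial}^{\underline j}$ because $\frac{\Phi_{F'_{\hat U}}}{[p^{i-|\underline j|}]}$ factors through the graded quotient $(E^{i-|\underline j|,n-i+|\underline j|})_0$. One correction: your parenthetical claim that $\underline j!$ ``does not decrease valuation'' is false once $|\underline j|\geq p$, so the asserted bound $\ord_p(z^{\underline j}/\underline j!)\geq |\underline j|$ only holds for $|\underline j|\leq p-1$; for $|\underline j|\geq p$ one needs instead $\ord_p(p^{|\underline j|}/\underline j!)\geq p-1\geq i+1$ (valid because $i\leq n\leq p-2$), which is exactly the estimate the paper records and which still kills those tail terms modulo $p^{i+1}H$ (the exponent $2|\underline j|-i$ you wrote for those terms is likewise not what the count gives, though the containment in $p^{i+1}H$ survives). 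Everything else, including your well-definedness discussion of the reduction from $\nabla$ to $\theta$, matches the paper's commutative-diagram argument.
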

\begin{proof}
First of all, as each $z_j, 1\leq j\leq d$ is divisible by $p$ and
$i\leq n\leq p-2$, $\frac{z^{\underline j}}{p^{|\underline
j|}\underline{j}!}$ in the above formula is a well-defined element
in $\sO_{U_0}$. Let $\tilde e\in Fil^iH$ be a lifting of $e$.
Applying the Taylor formula over $\sO_{\hat U}$ in the situation
that $R'=R$ and $\iota=id$, we get
$$
\Phi_{F_{\hat U}}(\tilde e\otimes1)=\sum_{|\underline
j|=0}^{\infty}\Phi_{F_{\hat
U}^{\prime}}(\nabla_{\partial}^{\underline j}(\tilde e)\otimes
1)\otimes\frac{z^{\underline j}}{\underline{j}!}.
$$
We observe $\ord_p(\frac{p^{|\underline j|}}{\underline{j}!})\geq
p-1$ for $|\underline j|\geq p$ and $\ord_p(\frac{p^{|\underline
j|}}{\underline{j}!})=|\underline j|$ for $|\underline j|\leq p-1$.
The the above formula can written as
$$
\Phi_{F_{\hat U}}(\tilde e\otimes1)-\Phi_{F'_{\hat U}}(\tilde
e\otimes1)=\sum_{|\underline j|=1}^{i}\Phi_{F_{\hat
U}^{\prime}}(\nabla_{\partial}^{\underline j}(\tilde e)\otimes
1)\otimes\frac{z^{\underline j}}{\underline{j}!}+\sum_{|\underline
j|\geq i+1}\Phi_{F_{\hat U}^{\prime}}(\nabla_{\partial}^{\underline
j}(\tilde e)\otimes 1)\otimes\frac{z^{\underline
j}}{\underline{j}!}.
$$
As $i+1\leq p-1$, the above estimation on the $p$-adic valuation
implies that the second term in the right side is an element in
$p^{i+1}H$. By the Griffiths transversality,
$\nabla_{\partial}^{\underline j}(\tilde e)\in Fil^{i-|\underline
j|}H$. Write $\tilde e_0=\tilde e\mod p$. Thus we have the following
formula which takes value in $H_0(U_0)$:
$$
\frac{\Phi_{F_{\hat U}}}{[p^i]}( e\otimes1)-\frac{\Phi_{F'_{\hat
U}}}{[p^i]}( e\otimes1)=\sum_{|\underline
j|=1}^{i}\frac{\Phi_{F_{\hat U}^{\prime}}}{[p^{i-|\underline
j|}]}(\nabla_{\partial}^{\underline j}(\tilde e_0)\otimes
1)\otimes\frac{z^{\underline j}}{p^{|\underline j|}\underline{j}!}.
$$
Regarding $\frac{\Phi_{F_{\hat U}^{\prime}}}{[p^{i-|\underline
j|}]}$ as a morphism between sheaves of abelian groups
$$
F_{U_0}^{-1}(Fil^{i-|\underline j|}H)_0\to H_0,
$$
one has the following commutative diagram:
$$
\xymatrix{
F_{U_0}^{-1}(Fil^{i}H)_0\ar[rr]^{F_{U_0}^{-1}\nabla_{\partial}^{\underline
j}}\ar[d]_{pr}& &F_{U_0}^{-1}(Fil^{i-|\underline
j|}H)_0\ar[d]^{pr}\ar[rr]^{\frac{\Phi_{F_{\hat U}^{\prime}}}{[p^{i-|\underline j|}]}} && H_0 \\
   F_{U_0}^{-1}(E^{i,n-i})_0\ar[rr]_{F_{U_0}^{-1}\theta_{\partial}^{\underline j}} &&   F_{U_0}^{-1}(E^{i-|\underline j|,n-i+|\underline j|})_0\ar[urr]_{\frac{\Phi_{F_{\hat U}^{\prime}}}{[p^{i-|\underline j|}]}} &&}
$$
It implies that in the previous formula the connection can be
replaced by the Higgs field. Hence the lemma follows.
\end{proof}
\begin{proposition}\label{taylor formula for tilde phi}
Notation as above. For a local section $e$ of $E_0(U_0)$, one has
the following formula:
$$
\tilde{\Phi}_{F_{\hat U}}(e\otimes 1)-\tilde{\Phi}_{F'_{\hat
U}}(e\otimes 1)=\sum_{|\underline j|=1}^{n}\tilde{\Phi}_{F'_{\hat
U}}(\theta_{\partial}^{\underline j}(e)\otimes 1)\otimes
\frac{z^{\underline j}}{p^{|\underline j|}\underline{j}!}.
$$
\end{proposition}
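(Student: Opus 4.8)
The plan is to deduce the formula from Lemma~\ref{taylor formula using higgs field over the same open set} by working componentwise in the Hodge grading and then reorganizing a double sum. Since $(E,\theta)_0=\bigoplus_{i=0}^n(E^{i,n-i})_0$ is a graded $\sO_{U_0}$-module and $\tilde\Phi_{F_{\hat U}}=\sum_{i=0}^n\frac{\Phi_{F_{\hat U}}}{[p^i]}$ with $\frac{\Phi_{F_{\hat U}}}{[p^i]}$ defined on the $i$-th piece $F_{U_0}^*(E^{i,n-i})_0$, I would first write $e=\sum_{i=0}^n e^i$ with $e^i\in(E^{i,n-i})_0(U_0)$, so that $\tilde\Phi_{F_{\hat U}}(e\otimes 1)=\sum_i\frac{\Phi_{F_{\hat U}}}{[p^i]}(e^i\otimes 1)$ and likewise for $F'_{\hat U}$. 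Applying Lemma~\ref{taylor formula using higgs field over the same open set} to each $e^i$ and summing over $i$ gives
$$
\tilde\Phi_{F_{\hat U}}(e\otimes 1)-\tilde\Phi_{F'_{\hat U}}(e\otimes 1)=\sum_{i=0}^n\sum_{|\underline j|=1}^{i}\frac{\Phi_{F'_{\hat U}}}{[p^{i-|\underline j|}]}\big(\theta_{\partial}^{\underline j}(e^i)\otimes 1\big)\otimes\frac{z^{\underline j}}{p^{|\underline j|}\underline j!},
$$
where, exactly as in the proof of that lemma, each coefficient $z^{\underline j}/(p^{|\underline j|}\underline j!)$ is a well-defined element of $\sO_{U_0}$ because $1\le|\underline j|\le n\le p-2$ and each $z_l$ is divisible by $p$.

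The second step is to interchange the order of summation, summing over $\underline j$ first. The point is that the Higgs field is homogeneous of degree $-1$: $\theta_{\partial_{t_l}}$ maps $(E^{a,n-a})_0$ to $(E^{a-1,n-a+1})_0$, so $\theta_{\partial}^{\underline j}(e^i)\in(E^{i-|\underline j|,\,n-i+|\underline j|})_0(U_0)$, and in particular $\theta_{\partial}^{\underline j}(e^i)=0$ whenever $|\underline j|>i$. Hence the constraint $|\underline j|\le i$ in the inner sum is automatic and may be dropped; letting $\underline j$ run over all multi-indices with $1\le|\underline j|\le n$ and $i$ over $0\le i\le n$, the summand simply vanishes unless $i\ge|\underline j|$. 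Moreover $\theta_{\partial}^{\underline j}(e^i)$ is homogeneous of Hodge degree $i-|\underline j|$, so it is killed by every graded piece of $\tilde\Phi_{F'_{\hat U}}$ except the $(i-|\underline j|)$-th; therefore $\frac{\Phi_{F'_{\hat U}}}{[p^{i-|\underline j|}]}\big(\theta_{\partial}^{\underline j}(e^i)\otimes 1\big)=\tilde\Phi_{F'_{\hat U}}\big(\theta_{\partial}^{\underline j}(e^i)\otimes 1\big)$. Summing over $i$ and using $\theta_{\partial}^{\underline j}(e)=\sum_i\theta_{\partial}^{\underline j}(e^i)$ then collapses the inner sum to $\tilde\Phi_{F'_{\hat U}}\big(\theta_{\partial}^{\underline j}(e)\otimes 1\big)$, and the asserted identity drops out.

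I would also note in passing that $\theta_{\partial}^{\underline j}$ is independent of the order of its factors, since $\theta\wedge\theta=0$ forces $[\theta_{\partial_{t_l}},\theta_{\partial_{t_m}}]=0$; this is already implicit in Lemma~\ref{taylor formula using higgs field over the same open set}. The only genuinely delicate point --- though it is purely combinatorial rather than analytic --- is this reindexing of the double sum together with the observation that $\tilde\Phi_{F'_{\hat U}}$ acts on a Hodge-homogeneous element through a single one of its graded components; once the degree bookkeeping is arranged correctly the identity is formal, and no estimate beyond the one already established in Lemma~\ref{taylor formula using higgs field over the same open set} is needed.
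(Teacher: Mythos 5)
Your proof is correct and follows essentially the same route as the paper: decompose $e$ into its Hodge-graded components, apply Lemma \ref{taylor formula using higgs field over the same open set} to each piece, and interchange the double sum using that $\theta_{\partial}^{\underline j}$ shifts the Hodge degree by $-|\underline j|$ and kills $e^i$ when $|\underline j|>i$. Your additional observation that $\tilde\Phi_{F'_{\hat U}}$ acts on a homogeneous element through a single graded component is exactly the (implicit) final step of the paper's argument.
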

\begin{proof}
Write $e=\sum_{i=0}^{n}e_i$ with $e_i\in (E^{i,n-i})_0$. Lemma
\ref{taylor formula using higgs field over the same open set}
implies
$$
\tilde{\Phi}_{F_{\hat U}}(e\otimes 1)-\tilde{\Phi}_{F'_{\hat
U}}(e\otimes 1)=\sum_{i=0}^{n}\sum_{|\underline
j|=1}^{i}\frac{\Phi_{F'_{\hat U}}}{[p^{i-|\underline
j|}]}(\theta_{\partial}^{\underline j}(e_i)\otimes
1)\otimes\frac{z^{\underline j}}{p^{|\underline j|}\underline{j}!}.
$$
As $\theta_{\partial}^{\underline
j}(e)=\sum_{i=0}^{n}\theta_{\partial}^{\underline j}(e_i)$ and
$\theta_{\partial}^{\underline j}(e_i)=0$ for $|\underline j|\geq
i+1$, the above summation is equal to
$$
\sum_{|\underline j|=1}^{n}[\sum_{i=|\underline
j|}^{n}\frac{\Phi_{F'_{\hat U}}}{[p^{i-|\underline
j|}]}(\theta_{\partial}^{\underline j}(e_i)\otimes
1)]\otimes\frac{z^{\underline j}}{p^{|\underline j|}\underline{j}!}
=\sum_{|\underline j|=1}^{n}\tilde{\Phi}_{F'_{\hat
U}}(\theta_{\partial}^{\underline j}(e)\otimes 1)\otimes
\frac{z^{\underline j}}{p^{|\underline j|}\underline{j}!}.
$$
\end{proof}

The above proposition justifies the following
\begin{definition}\label{local associated bundle}
For the Higgs subbundle $(G,\theta)\subset (E,\theta)_0$, the
locally associated subbundle $S_{U_0}(G)\subset H_0$ over
$U_0\subset X_0$ is defined to be $\tilde{\Phi}_{F_{\hat
U}}(G_{U_0})$, where $U$ is a small affine subset of $X$ with the
closed fiber $U_0$ and $F_{\hat U}$ is a Frobenius lifting over
$\hat U$.
\end{definition}

\subsection{Gluing}
Take $U,V\in \sU$, and Frobenius liftings $F_{\hat U}, F_V,
F_{\widehat{U\cap V}}$ on $\hat U,\hat V,\widehat{U\cap U}$
respectively. We are going to show the following equality of
subbundles in $H_0|_{U_0\cap V_0}$:
$$S_{U_0}(G)|_{U_0\cap V_0}=S_{U_0\cap
V_0}(G)=S_{V_0}(G)|_{U_0\cap V_0}.$$ The following lemma is a
variant of Lemma \ref{taylor formula using higgs field over the same
open set}:
\begin{lemma}\label{taylor formula using higgs field over the inclusion of open
sets} Write $z_i=F_{\hat U}\circ \iota(t_i)-\iota\circ
F_{\widehat{U\cap V}}(t_i)$, where $\iota:\widehat{ U\cap
V}\hookrightarrow \hat U$ is the natural inclusion. Then for a local
section $e\in (E^{i,n-i})_0(U_0)$, one has the formula
$$
\iota_0^*[\frac{\Phi_{F_{\hat U}}}{[p^i]}(e\otimes
1)]-\frac{\Phi_{F_{\widehat{U\cap V}}}}{[p^i]}[\iota_0^*(e)\otimes
1]=\sum_{|\underline j|=1}^{i}\frac{\Phi_{F_{\widehat{U\cap
V}}}}{[p^{i-|\underline
j|}]}(\iota_0^*[\theta_{\partial}^{\underline j}(e)]\otimes
1)\otimes\frac{z^{\underline j}}{p^{|\underline j|}\underline{j}!}
$$
\end{lemma}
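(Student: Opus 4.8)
The plan is to mimic the proof of Lemma \ref{taylor formula using higgs field over the same open set} verbatim, replacing the rôle of the second Frobenius lifting $F'_{\hat U}$ over the \emph{same} formal scheme $\hat U$ by the induced Frobenius lifting $F_{\widehat{U\cap V}}$ over the smaller formal scheme $\widehat{U\cap V}$, and inserting the pull-back functor $\iota^*$ where appropriate. Concretely, first I would lift $e\in(E^{i,n-i})_0(U_0)$ to a local section $\tilde e\in Fil^iH(U)$, and apply the general Taylor formula quoted in \S1 — the one comparing $\Phi_{F'}$ and $\iota^*\Phi_F$ via the isomorphism $\alpha$ — to the present situation with $R'=\sO_{\widehat{U\cap V}}$, $F'=F_{\widehat{U\cap V}}$, $\iota$ the natural inclusion, and the $z_i$ as defined in the statement. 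This gives
$$
\Phi_{F_{\widehat{U\cap V}}}(\iota^*\tilde e\otimes 1)=\sum_{|\underline j|=0}^{\infty}\iota^*\Phi_{F_{\hat U}}\bigl(\nabla_{\partial}^{\underline j}(\tilde e)\otimes 1\bigr)\otimes\frac{z^{\underline j}}{\underline{j}!}.
$$

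Next I would run exactly the same $p$-adic valuation estimate: since each $z_j$ is divisible by $p$ and $\ord_p(p^{|\underline j|}/\underline j!)\geq p-1$ for $|\underline j|\geq p$ while it equals $|\underline j|$ for $|\underline j|\leq p-1$, and since $i+1\leq n+1\leq p-1$, the tail $\sum_{|\underline j|\geq i+1}$ lies in $p^{i+1}H$. Hence modulo $p^{i+1}$ only the terms $1\leq|\underline j|\leq i$ survive (plus the $|\underline j|=0$ term, which is subtracted off). Dividing by $[p^i]$ via the canonical isomorphism $\frac{1}{[p^i]}\colon p^iH/p^{i+1}H\to H_0$, using Griffiths transversality $\nabla_{\partial}^{\underline j}(\tilde e)\in Fil^{i-|\underline j|}H$ to make sense of the division by $[p^{i-|\underline j|}]$, and reducing $\tilde e$ mod $p$, yields the formula with the connection $\nabla_{\partial}^{\underline j}$ in place of $\theta_{\partial}^{\underline j}$. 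Finally, the commutative diagram from the end of the proof of Lemma \ref{taylor formula using higgs field over the same open set} — now with $F_{U_0}$ replaced by $F_{(U\cap V)_0}$ and everything pulled back along $\iota_0$ — shows that $\frac{\Phi_{F_{\widehat{U\cap V}}}}{[p^{i-|\underline j|}]}$ kills the image of $pFil^{i-|\underline j|}H$ and therefore factors through the grading, so the connection may be replaced by the Higgs field, giving the asserted identity.

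I do not expect any genuine obstacle here: the statement is, as the paper says, a \emph{variant} of the previous lemma, and every ingredient — the general Taylor formula of \S1, the valuation estimate, the snake-lemma isomorphism $\frac{1}{[p^i]}$, Griffiths transversality, and the factorization diagram — is already in place. The only points requiring mild care are purely bookkeeping: keeping track of where $\iota^*$ (resp. $\iota_0^*$) is applied, checking that $\iota^*$ is compatible with $F^*$, with $\nabla_{\partial}^{\underline j}$ (which holds because $F_{\widehat{U\cap V}}$ is the Frobenius lifting \emph{induced} by $F_{\hat U}$, so the two relative Frobenii are intertwined by $\alpha$ precisely as in the quoted Taylor formula), and with the reduction mod $p^{i+1}$, and confirming that the well-definedness condition $\frac{z^{\underline j}}{p^{|\underline j|}\underline j!}\in\sO_{(U\cap V)_0}$ still uses only $|\underline j|\leq i\leq n\leq p-2$. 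None of this involves new ideas beyond those in Lemma \ref{taylor formula using higgs field over the same open set}.
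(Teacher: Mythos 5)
Your proposal is correct and coincides with the paper's own proof, which is literally the one‑line remark that the argument of the preceding lemma goes through verbatim once the Taylor formula of \S1 is applied with $\Spf(R')=\widehat{U\cap V}$, $F'=F_{\widehat{U\cap V}}$ and $\iota$ the natural inclusion; every step you list (lifting $e$, the valuation estimate, division by $[p^i]$, Griffiths transversality, and the factorization through the grading) is exactly the intended transcription. The only point worth a remark — present equally in the paper's terse proof — is the direction of the expansion: applying $\alpha$ as you do puts $\iota^*\Phi_{F_{\hat U}}$ in the correction terms with $z_i=F_{\widehat{U\cap V}}\circ\iota(t_i)-\iota\circ F_{\hat U}(t_i)$, while the displayed statement (with the opposite sign of $z_i$) corresponds to expanding via $\alpha^{-1}$ so that $\Phi_{F_{\widehat{U\cap V}}}$ appears on the right; the two versions are equivalent and either suffices for the gluing argument.
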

\begin{proof}
The proof is the same as in Lemma \ref{taylor formula using higgs
field over the same open set} except that we shall apply the Taylor
formula in the situation that $R'$ is the one with
$\Spf(R')=\widehat {U\cap V}$, $F'=F_{\widehat{U\cap V}}$ and
$\iota: R\to R'$ is the one induced by the natural inclusion.
\end{proof}
A formula similar to that of Proposition \ref{taylor formula for
tilde phi} shows that $S_{U_0}(G)|_{U_0\cap V_0}=S_{U_0\cap
V_0}(G)$. By symmetry we have also the second half equality. The
open covering $\sU$ of $X$ gives rise to an open covering $\sU_0$ of
$X_0$ by reduction modulo $p$. Thus we glue the locally associated
bundles $\{S_{U_0}(G)\}_{U_0\in \sU_0}$ into a subbundle
$H_{(G,\theta)}\subset H_0$, which we call the \emph{associated
subbundle to} $(G,\theta)$. We remark that the construction is
independent of the choice of a small affine open covering $\sU$ of
$X$ as we can always refine such a covering and Lemma \ref{taylor
formula using higgs field over the inclusion of open sets} shows the
invariance of the construction under a refinement.
\subsection{Horizontal property}
We ought to show the associated subbundle $H_{(G,\theta)}\subset H$
is actually $\nabla$-invariant. Let $F_{\hat U}: \hat U\to \hat U$
be a Frobenius lifting over $\hat U$. Then one can write
$\frac{\partial F_{\hat U}}{\partial t_j}=pf_j$ for $f_j\in
\sO_{\hat U}$. Here is a lemma
\begin{lemma}\label{commutation formula using higgs field}
For a local section $e\in (E^{i,n-i})_0(U_0)$, one has the formula
$$
\nabla_{\partial_{t_j}}[\frac{\Phi_{F_{\hat
U}}}{[p^{i}]}(e\otimes1)]=\frac{\Phi_{F_{\hat
U}}}{[p^{i-1}]}[\theta_{\partial_{t_j}}(e)\otimes f_{j,0}].
$$
\end{lemma}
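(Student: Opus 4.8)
The plan is to reduce the claimed commutation formula to the horizontality of $\Phi_{F_{\hat U}}$ with respect to the connections $F_{\hat U}^*\nabla$ and $\nabla$ (condition iv) of the Faltings category), combined with the explicit description of the pull-back connection $F_{\hat U}^*\nabla$ via $dF_{\hat U}$. First I would lift $e\in (E^{i,n-i})_0(U_0)$ to a section $\tilde e\in Fil^iH$ over $\hat U$, so that $\frac{\Phi_{F_{\hat U}}}{[p^i]}(e\otimes 1)$ is represented, modulo $p$, by $\frac{1}{p^i}\Phi_{F_{\hat U}}(\tilde e\otimes 1)\in H$. Apply $\nabla_{\partial_{t_j}}$ to $\Phi_{F_{\hat U}}(\tilde e\otimes 1)$ and use the commutative square in iv): $\nabla\circ\Phi_{F_{\hat U}} = (\Phi_{F_{\hat U}}\otimes\id)\circ F_{\hat U}^*\nabla$. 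From the definition of $F_{\hat U}^*\nabla$ as $(\id\otimes dF_{\hat U})\circ(F_{\hat U}^{-1}\nabla\otimes\id)$, contracting against $\partial_{t_j}$ picks out $\sum_l \frac{\partial F_{\hat U}}{\partial t_j}\big(\text{in the }t_l\text{ direction}\big)\cdot \nabla_{\partial_{t_l}}$-type terms; writing $\frac{\partial F_{\hat U}}{\partial t_j}=pf_j$ (more precisely the relation $F_{\hat U}^*(dt_l)=\sum$ with the appropriate $p$-divisible coefficients, which in these coordinates is governed by $pf_j$) produces the factor $p$ that shifts $p^i$ to $p^{i-1}$.

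The key steps, in order: (1) choose the lift $\tilde e$ and record $\nabla_{\partial_{t_j}}\big(\tfrac{1}{p^i}\Phi_{F_{\hat U}}(\tilde e\otimes 1)\big)=\tfrac{1}{p^i}\nabla_{\partial_{t_j}}\Phi_{F_{\hat U}}(\tilde e\otimes 1)$; (2) rewrite the right side via iv) as $\tfrac{1}{p^i}(\Phi_{F_{\hat U}}\otimes\id)\big(F_{\hat U}^*\nabla(\tilde e\otimes 1)\big)$ contracted against $\partial_{t_j}$; (3) unwind $F_{\hat U}^*\nabla$ using $id\otimes dF_{\hat U}$ and the hypothesis $\tfrac{\partial F_{\hat U}}{\partial t_j}=pf_j$ to extract the factor $p$, obtaining $\tfrac{1}{p^{i-1}}\Phi_{F_{\hat U}}\big(\nabla_{\partial_{t_j}}(\tilde e)\otimes f_j\big)$ up to terms that are visibly in $p^iH$ hence vanish after division by $p^i$ and reduction mod $p$; (4) by Griffiths transversality $\nabla_{\partial_{t_j}}(\tilde e)\in Fil^{i-1}H$, so $\tfrac{\Phi_{F_{\hat U}}}{[p^{i-1}]}$ is defined on its class, and by the commutative diagram relating $\nabla_{\partial_{t_j}}$ to $\theta_{\partial_{t_j}}$ (exactly the square appearing in the proof of Lemma \ref{taylor formula using higgs field over the same open set}) the connection may be replaced by the Higgs field, giving $\tfrac{\Phi_{F_{\hat U}}}{[p^{i-1}]}\big(\theta_{\partial_{t_j}}(e)\otimes f_{j,0}\big)$; (5) check well-definedness: the ambiguity in the lift $\tilde e$ lies in $pH+Fil^{i+1}H$, and on $pH$ the operator $\nabla_{\partial_{t_j}}$ followed by $\tfrac{1}{p^i}\Phi_{F_{\hat U}}$ lands in $pH$, while on $Fil^{i+1}H$ the strong divisibility $\Phi_{F_{\hat U}}(F_{\hat U}^*Fil^{i+1})\subset p^{i+1}H$ together with transversality handles the rest, so the formula descends to $U_0$.

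The main obstacle I anticipate is the careful bookkeeping in step (3): controlling which terms of $F_{\hat U}^*\nabla(\tilde e\otimes 1)$ survive after dividing by $p^i$ and reducing mod $p$. One must verify that the only contribution mod $p$ comes from the single $\partial_{t_j}$-derivative with the coefficient $f_{j,0}=pf_j/p$, and that cross terms or higher-order effects from $dF_{\hat U}$ acting on other coordinates are either absorbed into $p^iH$ or reassembled into $\theta_{\partial_{t_j}}(e)$ via the sum over coordinate directions. This is the same type of $p$-adic valuation estimate used in Lemma \ref{taylor formula using higgs field over the same open set} ($i\leq n\leq p-2$), so the hypothesis on $p$ enters here exactly as before; once that estimate is in place the identification with the Higgs field is formal from the displayed commutative square in the earlier proof.
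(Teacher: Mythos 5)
Your proposal matches the paper's proof: lift $e$ to $\tilde e\in Fil^iH$, apply the horizontality condition iv) to obtain $\nabla_{\partial_{t_j}}[\Phi_{F_{\hat U}}(\tilde e\otimes 1)]=\Phi_{F_{\hat U}}[\nabla_{\partial_{t_j}}(\tilde e)\otimes 1]\otimes\frac{\partial F_{\hat U}}{\partial t_j}$, divide by $p^i$ using $\frac{\partial F_{\hat U}}{\partial t_j}=pf_j$ together with Griffiths transversality, and then replace $\nabla_{\partial_{t_j}}$ by $\theta_{\partial_{t_j}}$ via the commutative square from the proof of Lemma \ref{taylor formula using higgs field over the same open set}. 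The paper's version is terser (it does not spell out your well-definedness check in step (5) or the bookkeeping of the off-diagonal terms of $dF_{\hat U}$), but the argument is essentially identical.
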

\begin{proof}
Let $\tilde e\in Fil^iH_U$ be a lifting of $e$. The horizontal
property iv) yields the following commutation formula
$$
\nabla_{\partial_{t_j}}[\Phi_{F_{\hat U}}(\tilde e\otimes
1)]=\Phi_{F_{\hat U}}[\nabla_{\partial_{t_j}}(\tilde e)\otimes
1]\otimes \frac{\partial F_{\hat U}}{\partial t_j}.
$$
Thus we have a formula in characteristic $p$:
$$
\nabla_{\partial_{t_j}}[\frac{\Phi_{F_{\hat
U}}}{[p^{i}]}(e\otimes1)]=\frac{\Phi_{F_{\hat
U}}}{[p^{i-1}]}[\nabla_{\partial_{t_j}}(e)\otimes 1]\otimes f_{j,0}.
$$
Finally by the same reason as given in the proof of Lemma
\ref{taylor formula using higgs field over the same open set} we
could replace the connection in the right side by the Higgs field,
and hence obtain the lemma.
\end{proof}
\begin{proposition}
The associated subbundle $H_{(G,\theta)}$ to the Higgs subbundle
$(G,\theta)\subset (E,\theta)_0$ is a de Rham subbundle of
$(H,\nabla)_0$.
\end{proposition}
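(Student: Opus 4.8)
The plan is to show that $\nabla$ maps $H_{(G,\theta)}$ into $H_{(G,\theta)}\otimes\Omega^1_{X_0}$, and this is a local computation which can be checked on each small affine $U_0$ with a fixed Frobenius lifting $F_{\hat U}$ and a fixed system of \'{e}tale local coordinates $t_1,\dots,t_d$. By definition $S_{U_0}(G)=\tilde\Phi_{F_{\hat U}}(G_{U_0})$, so it suffices to prove that for every local section $g=\sum_{i=0}^n g_i$ of $G$ with $g_i\in(E^{i,n-i})_0\cap G$ and every coordinate vector field $\partial_{t_j}$, the element $\nabla_{\partial_{t_j}}\bigl(\tilde\Phi_{F_{\hat U}}(g\otimes 1)\bigr)$ again lies in $\tilde\Phi_{F_{\hat U}}(G_{U_0})$. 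First I would apply Lemma \ref{commutation formula using higgs field} componentwise: writing $\frac{\partial F_{\hat U}}{\partial t_j}=pf_j$ and $f_{j,0}=f_j\bmod p$, the lemma gives
$$
\nabla_{\partial_{t_j}}\Bigl[\tilde\Phi_{F_{\hat U}}(g\otimes 1)\Bigr]
=\sum_{i=0}^{n}\nabla_{\partial_{t_j}}\Bigl[\tfrac{\Phi_{F_{\hat U}}}{[p^i]}(g_i\otimes 1)\Bigr]
=\sum_{i=1}^{n}\tfrac{\Phi_{F_{\hat U}}}{[p^{i-1}]}\bigl[\theta_{\partial_{t_j}}(g_i)\otimes f_{j,0}\bigr].
$$
The key observation is that $\theta_{\partial_{t_j}}(g_i)\in(E^{i-1,n-i+1})_0$ lies in $G$, because $(G,\theta)$ is a Higgs \emph{sub}bundle, i.e. $\theta$-stable. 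Hence, setting $h:=\sum_{i=1}^{n}\theta_{\partial_{t_j}}(g_i)$, which is a local section of $G$, the right-hand side is exactly $\tilde\Phi_{F_{\hat U}}(h\otimes f_{j,0})=f_{j,0}\cdot\tilde\Phi_{F_{\hat U}}(h\otimes 1)\in S_{U_0}(G)$. Summing over $j$ and tensoring with $dt_j$ shows $\nabla\bigl(S_{U_0}(G)\bigr)\subset S_{U_0}(G)\otimes\Omega^1_{U_0}$.

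It then remains to note two compatibilities. First, these local statements glue: by the discussion in the Gluing subsection the locally associated bundles $S_{U_0}(G)$ agree on overlaps and patch to the globally defined subbundle $H_{(G,\theta)}\subset H_0$, so the locally verified $\nabla$-invariance globalizes to $\nabla\bigl(H_{(G,\theta)}\bigr)\subset H_{(G,\theta)}\otimes\Omega^1_{X_0}$. Second, $H_{(G,\theta)}$ is a subbundle (locally a direct summand) of $H_0$: this is immediate since $\tilde\Phi_{F_{\hat U}}\colon F_{U_0}^*E_0\to H_0$ is an isomorphism of vector bundles by strong $p$-divisibility, and $G\subset E_0$ is a subbundle, so its image under $\tilde\Phi_{F_{\hat U}}\circ F_{U_0}^*$ is a subbundle locally on $U_0$. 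Together these say precisely that $(H_{(G,\theta)},\nabla)$ is a de Rham subbundle of $(H,\nabla)_0$.

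The only genuinely delicate point is bookkeeping: making sure Lemma \ref{commutation formula using higgs field} can be applied term by term and that the index shift $i\mapsto i-1$ (with the $i=0$ term dropping out, as $\theta_{\partial_{t_j}}$ kills $E^{0,n}$) is handled correctly, and that the factor $f_{j,0}\in\sO_{U_0}$ can be pulled out of $\tilde\Phi_{F_{\hat U}}$ — which is legitimate because $\tilde\Phi_{F_{\hat U}}$ is $\sO_{U_0}$-linear on $F_{U_0}^*E_0$ once we remember the source is a Frobenius pullback, so scalars from $\sO_{U_0}$ act through the second factor. No transcendental input is needed; everything reduces to the horizontal property iv) of the Frobenius structure together with $\theta$-stability of $G$.
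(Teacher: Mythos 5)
Your proof is correct and follows essentially the same route as the paper: localize to a small affine with a chosen Frobenius lifting, apply Lemma \ref{commutation formula using higgs field} componentwise to the Hodge decomposition $g=\sum_i g_i$, and conclude by the $\theta$-stability of $G$ together with the gluing already established. One small caveat: for a general Higgs subbundle the components $g_i\in(E^{i,n-i})_0$ of a section $g$ of $G$, and likewise the individual terms $\theta_{\partial_{t_j}}(g_i)$, need \emph{not} lie in $G$ (that would hold only for a subsystem of Hodge bundles), but this misstatement is harmless since your computation only uses that the full sum $h=\theta_{\partial_{t_j}}(g)$ lies in $G$, which is exactly what $\theta$-stability gives.
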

\begin{proof}
As the question is local, it suffices to show the invariance
property of the locally associated subbundle $S_{U_0}(G)\subset
H_0|_{U_0}$. Lemma \ref{commutation formula using higgs field}
implies that for a local section $e\in G(U_0)$,
$$
\nabla_{\partial_{t_j}}[\tilde{\Phi}_{F_{\hat
U}}(e\otimes1)]=\tilde{\Phi}_{F_{\hat
U}}[\theta_{\partial_{t_j}}(e)\otimes f_{j,0}],
$$
which is again an element of $S_{U_0}(G)$ by the $\theta$-invariance
of $G$. As $\{\partial_{t_j}\}_{1\leq j\leq d}$ spans
$\textrm{Der}_{k}(\sO_{U_0},\sO_{U_0})$, we have shown that
$\nabla(S_{U_0}(G))\subset S_{U_0}(G)\otimes \Omega^1_{U_0}$ as
claimed.
\end{proof}

\subsection{Variants}
By examining the above construction, one finds immediately that it
works as well for a smooth scheme $X$ over $W_{n+1}$. Also it is
immediate to see that the same construction applies for a coherent
subobject in $(E,\theta)_0$. Namely, for a Higgs subsheaf of
$(E,\theta)_0$, we obtain a de Rham subsheaf of $(H,\nabla)_0$ from
the construction. A similar construction also works in the
logarithmic case. In the case of having a divisor at infinity, one
simply replaces the Frobenius liftings and the integrable connection
in the above construction with the logarithmic Frobenius liftings
and the logarithmic integrable connection. In the semistable case,
for a closed embedding $U\hookrightarrow Z$ as in \S1.2, we replace
the local operators $\frac{\Phi_{F_{\hat U}}}{[p^i]}$ in the smooth
case with the reduction of the operator $ \displaystyle{
\frac{\Phi_{F_{\widehat{D^{\log}_U(Z)}}}}{[p^i]}}$ modulo the
PD-ideal $J$, and use the Taylor formula of \S1.2 in the proofs. The
resulting construction yields a logarithmic de Rham subsheaf of
$(H,\nabla)_0$ for any logarithmic Higgs subsheaf of $(E,\theta)_0$.
\subsection{Basic properties}
Let $X$ be a smooth (resp. log smooth) scheme over $W$ (resp.
$W_{n+1}$) as above, and $(H,Fil^{\cdot},\nabla,\Phi)\in
\mathcal{MF}^{\nabla}_{[0,n]}(X)$. Let
$(E,\theta)=Gr_{Fil}(H,\nabla)$ be the associated Higgs bundle, and
for a Higgs subbundle $(G,\theta)\subset (E,\theta)_0$,
$(H_{(G,\theta)},\nabla)\subset (H,\nabla)_0$ the associated de Rham
subbundle by the previous construction. It is not difficult to check
the following properties:
\begin{proposition}\label{basic properties}
 The following statements hold:
\begin{itemize}
    \item [i)] The construction is compatible with pull-backs. Namely, for $f$ a morphism between smooth (resp. log smooth) schemes over $W$ (resp. $W_{n+1}$), one has
    $$(H_{f^*(G,\theta)},\nabla)=f^*(H_{(G,\theta)},\nabla).$$
    \item [ii)] The construction is compatible with direct
    sum and tensor product.
\end{itemize}
\end{proposition}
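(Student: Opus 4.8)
The plan is to reduce both assertions, which are local on the base, to a comparison of the local isomorphisms $\tilde\Phi_{F_{\hat U}}\colon F_{U_0}^*E_0\to H_0$ of \S2.1, using that the associated de Rham subbundle $(H_{(G,\theta)},\nabla)$ is canonical: it is independent of the small affine covering and of the chosen Frobenius liftings, by Lemma~\ref{taylor formula using higgs field over the inclusion of open sets} and Proposition~\ref{taylor formula for tilde phi}. One first records the routine facts that pull-back, direct sum and tensor product of objects of $\mathcal{MF}^{\nabla}_{[0,n]}(X)$ again lie in the Faltings category (for the tensor product under the standing assumption that the total weight stays in the admissible range), that $Gr_{Fil}$ commutes with all three operations because the filtrations in play are filtered free, and that the resulting Higgs bundles are $f^*(E,\theta)$, $(E,\theta)\oplus(E',\theta')$ and $(E\otimes E',\theta\otimes 1+1\otimes\theta')$, with corresponding Higgs subbundles $f^*G$, $G\oplus G'$ and $G\otimes G'$; here, on a characteristic $p$ object, $f^*$ means pull-back along the reduction $f_0$.

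\textbf{Part i).} Let $f\colon X'\to X$ be a morphism of smooth (resp. of log smooth) schemes over $W$ (resp. $W_{n+1}$). Pick small affine coverings of $X$ and $X'$ so that each small affine $U'\subset X'$ maps into a small affine $U\subset X$, fix coordinates $t_1,\dots,t_d$ of $U$ together with a Frobenius lifting $F_{\hat U}$, and choose any Frobenius lifting $F_{\hat U'}$ on $\hat U'$. The essential point is that one cannot in general arrange $F_{\hat U'}$ to be compatible with $f$ and $F_{\hat U}$; however the Taylor comparison of \S1.1 is already formulated for an arbitrary morphism $\iota\colon R\to R'$, so, taking $\iota$ to be the comorphism of $f|_{U'}$, the proofs of Lemma~\ref{taylor formula using higgs field over the inclusion of open sets} and Proposition~\ref{taylor formula for tilde phi} apply verbatim (they use only that each $z_i$ is divisible by $p$, that $n\le p-2$, and Griffiths transversality) and yield, for a local section $e$ of $E_0(U_0)$,
\[
f^*\bigl[\tilde\Phi_{F_{\hat U}}(e\otimes 1)\bigr]-\tilde\Phi_{F_{\hat U'}}(f^*e\otimes 1)=\sum_{|\underline j|=1}^{n}\tilde\Phi_{F_{\hat U'}}\bigl(f^*[\theta_{\partial}^{\underline j}(e)]\otimes 1\bigr)\otimes\frac{z^{\underline j}}{p^{|\underline j|}\underline j!},
\]
the remainder term having exactly the shape of Lemma~\ref{taylor formula using higgs field over the inclusion of open sets}. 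For $e\in G(U_0)$ every $\theta_{\partial}^{\underline j}(e)$ again lies in $G(U_0)$, so the right-hand side lies in $\tilde\Phi_{F_{\hat U'}}(f^*G|_{U'_0})=S_{U'_0}(f^*G)$ (Definition~\ref{local associated bundle}), whence $f^*S_{U_0}(G)\subset S_{U'_0}(f^*G)$ over $U'_0$ and therefore $f^*H_{(G,\theta)}\subset H_{(f^*(G,\theta))}$. Both are subbundles of $(f^*H)_0$ and, since $\tilde\Phi$ is an isomorphism, both have rank $\rank G$; an inclusion of subbundles of equal rank inside a vector bundle is an equality, so $f^*(H_{(G,\theta)},\nabla)=(H_{f^*(G,\theta)},\nabla)$. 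The logarithmic cases are identical, using logarithmic Frobenius liftings, the logarithmic Taylor formula of \S1.2, and in the semistable case the operators of \S2.4.

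\textbf{Part ii).} For a direct sum everything splits off: the relative Frobenius of $H\oplus H'$ is $\Phi\oplus\Phi'$, so $\frac{\Phi\oplus\Phi'}{[p^i]}=\frac{\Phi}{[p^i]}\oplus\frac{\Phi'}{[p^i]}$ and hence $\tilde\Phi_{F_{\hat U}}$ for $H\oplus H'$ is $\tilde\Phi_{F_{\hat U}}\oplus\tilde\Phi'_{F_{\hat U}}$; applied to $G\oplus G'$ and glued, this gives $H_{(G\oplus G')}=H_{(G)}\oplus H_{(G')}$. For a tensor product the relative Frobenius of $H\otimes H'$ is $\Phi\otimes\Phi'$, which retains the strong $p$-divisibility property because $Fil^k(H\otimes H')=\sum_{i+j=k}Fil^iH\otimes Fil^jH'$; the one thing to note is that the normalization $\frac{1}{[p^k]}$ is multiplicative, i.e. $\frac{1}{[p^{i+j}]}(a\otimes b)=\frac{1}{[p^i]}(a)\otimes\frac{1}{[p^j]}(b)$ for $a\in p^iH$ and $b\in p^jH'$, which is immediate from the description of $\frac{1}{[p^i]}$ as division by $p^i$. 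It follows that on the summand $(E^{i,n-i})_0\otimes(E'^{j,n'-j})_0$ of $Gr^{i+j}_{Fil}(H\otimes H')$ one has $\frac{\Phi\otimes\Phi'}{[p^{i+j}]}=\frac{\Phi}{[p^i]}\otimes\frac{\Phi'}{[p^j]}$, and summing over $i,j$ gives $\tilde\Phi_{F_{\hat U}}$ for $H\otimes H'$ equal to $\tilde\Phi_{F_{\hat U}}\otimes\tilde\Phi'_{F_{\hat U}}$; applying this to the $(\theta\otimes 1+1\otimes\theta')$-stable subbundle $G\otimes G'$ and gluing yields $H_{(G\otimes G')}=H_{(G)}\otimes H_{(G')}$.

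The only genuine subtlety is the one isolated in Part i): as there is no compatible Frobenius lifting on $X'$ in general, one is forced to run the Taylor comparison for a nontrivial $\iota$ and then to promote the resulting inclusion of subbundles to an equality via the rank count. The verifications in Part ii) are purely formal once the multiplicativity of $\frac{1}{[p^i]}$ is observed, the only caveat being the weight restriction needed to keep $H\otimes H'$ inside the category.
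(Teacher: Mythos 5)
The paper does not actually prove this proposition --- it is stated with the remark ``It is not difficult to check the following properties'' --- so there is nothing to compare against line by line; your write-up supplies the omitted verification using exactly the machinery the paper sets up, and it is correct. In particular, you correctly identify the only two points that are not purely formal: in i), that the Taylor comparison of \S1.1 is already stated for an arbitrary $W$-algebra morphism $\iota$ (each $z_i$ is still divisible by $p$ because all Frobenius liftings reduce to the absolute Frobenius, so the proofs of Lemma~\ref{taylor formula using higgs field over the inclusion of open sets} and Proposition~\ref{taylor formula for tilde phi} go through for the comorphism of $f$), after which the inclusion $f^*S_{U_0}(G)\subset S_{U_0'}(f^*G)$ is promoted to an equality by the rank count (or, equivalently, by inverting the Taylor isomorphism $\alpha$); and in ii), that the tensor product only stays in the admissible range when the total weight is still at most $p-2$, a restriction the paper leaves implicit. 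The multiplicativity $\frac{1}{[p^{i+j}]}(a\otimes b)=\frac{1}{[p^i]}(a)\otimes\frac{1}{[p^j]}(b)$ and the resulting identity $\tilde\Phi_{F_{\hat U}}^{H\otimes H'}=\tilde\Phi_{F_{\hat U}}\otimes\tilde\Phi'_{F_{\hat U}}$ are the right way to reduce ii) to the local picture. I see no gap.
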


One can make our construction into a functor. First of all, one
makes a category as follows: An object in this category is a Higgs
subbundle of an $(E,\theta)_0$, which is the modulo $p$ reduction of
$Gr_{Fil}(H,\nabla)$ where $(H,Fil^{\cdot},\nabla)$ comes from an
object in $\mathcal{MF}^{\nabla}_{[0,n]}(X)$ for an $n\geq p-2$. The
set of morphisms are required to be inclusions of Higgs subbundles
in the same Higgs bundle $(E,\theta)_0$. One defines the parallel
category on the de Rham side. These categories have direct sums and
tensor products. Proposition \ref{basic properties} ii) says that
the functor respects direct sum and tensor product. Summarizing the
above discussions, we have the following
\begin{theorem}
Let $X$ be as above and $(H,Fil^{\cdot},\nabla,\Phi)$ an object in
$\mathcal{MF}^{\nabla}_{[0,n]}(X)$. Let
$(E,\theta)=Gr_{Fil}(H,\nabla)$ be the associated Higgs bundle. Then
one associates \emph{naturally} a Higgs subbundle of $(E,\theta)_0$
to a de Rham subbundle of $(H,\nabla)_0$.
\end{theorem}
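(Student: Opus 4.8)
The plan is to package the construction of Section~\ref{section on construction} together with Proposition~\ref{basic properties} into the asserted functor; no input is needed beyond what has already been established there.

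First I would recall that the assignment on objects is already complete. Given a Higgs subbundle $(G,\theta)\subset (E,\theta)_0$, for each small affine $U\in\sU$ and each Frobenius lifting $F_{\hat U}$ on $\hat U$ the map $\tilde{\Phi}_{F_{\hat U}}\colon F_{U_0}^*E_0\to H_0$ is an isomorphism of $\sO_{U_0}$-modules by strong $p$-divisibility, so $S_{U_0}(G):=\tilde{\Phi}_{F_{\hat U}}(G_{U_0})$ is a subbundle of $H_0|_{U_0}$; Proposition~\ref{taylor formula for tilde phi} shows $S_{U_0}(G)$ is independent of the choice of $F_{\hat U}$, Lemma~\ref{taylor formula using higgs field over the inclusion of open sets} shows the $S_{U_0}(G)$ agree on overlaps and glue to a subbundle $H_{(G,\theta)}\subset H_0$ that is moreover independent of the covering $\sU$, and the horizontality proposition of Section~\ref{section on construction} shows $H_{(G,\theta)}$ is $\nabla$-stable. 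Thus $(G,\theta)\mapsto (H_{(G,\theta)},\nabla)$ sends an object of the category of Higgs subbundles of $(E,\theta)_0$ to an object of the parallel de Rham category.

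Next I would check that this assignment is a functor, i.e.\ that it sends morphisms to morphisms. Since in both categories the morphisms are just inclusions inside a fixed $(E,\theta)_0$, resp.\ $(H,\nabla)_0$, this amounts to showing that $(G',\theta)\subset (G,\theta)$ implies $(H_{(G',\theta)},\nabla)\subset (H_{(G,\theta)},\nabla)$. Locally this is immediate: the $\sO_{U_0}$-module isomorphism $\tilde{\Phi}_{F_{\hat U}}$ carries the inclusion $G'_{U_0}\subset G_{U_0}$ to $S_{U_0}(G')\subset S_{U_0}(G)$, and gluing preserves inclusions (again by Lemma~\ref{taylor formula using higgs field over the inclusion of open sets}), so the global subbundles are nested as well. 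That the functor commutes with direct sums and tensor products is exactly Proposition~\ref{basic properties} ii), and that it is compatible with pull-backs along morphisms of the base is Proposition~\ref{basic properties} i); together these make precise the word \emph{naturally}. The same verifications carry over verbatim to the logarithmic and truncated-Witt-ring settings, using the variants of the construction discussed in Section~\ref{section on construction}.

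Since every genuinely delicate point has already been settled in Section~\ref{section on construction} --- the independence of $\tilde{\Phi}_{F_{\hat U}}$ from the Frobenius lifting, obtained via the Taylor formula in Lemma~\ref{taylor formula using higgs field over the same open set} and Proposition~\ref{taylor formula for tilde phi}, and the horizontality coming from Lemma~\ref{commutation formula using higgs field} --- I do not expect a substantial obstacle here. The only things to be mildly careful about are the purely formal compatibilities of gluing, refinement, direct sums and tensor products with inclusions, and these all reduce to the fact that $\tilde{\Phi}_{F_{\hat U}}$ is an $\sO_{U_0}$-module isomorphism together with the already proved Taylor formulas.
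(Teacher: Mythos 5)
Your proposal is correct and follows essentially the same route as the paper: the theorem is stated there as a summary of the construction of Section \ref{section on construction} (local definition via $\tilde{\Phi}_{F_{\hat U}}$, independence of the Frobenius lifting, gluing, horizontality) together with Proposition \ref{basic properties}, with no further argument given. Your additional remark that inclusions are preserved because $\tilde{\Phi}_{F_{\hat U}}$ is an $\sO_{U_0}$-module isomorphism is exactly the (implicit) content needed for functoriality.
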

In the following let $X$ be a smooth scheme over $W$ or $W_{n+1}$.
The next result relates our construction in the zero Higgs field
case with the Cartier descent (see Theorem 5.1 \cite{Ka}).
\begin{proposition}\label{corollary on the canonical connection}
If $(G,0)\subset (E,\theta)_0$ is a Higgs subbundle with zero Higgs
field, then one has an isomorphism of vector bundles with integrable
connection
$$
\tilde \Phi:
(F_{X_0}^*G,\nabla_{can})\stackrel{\cong}{\longrightarrow}
(H_{(G,0)},\nabla|_{H_{(G,0)}}),
$$
where $\nabla_{can}$ is the canonical connection associated to a
Frobenius pull-back vector bundle.
\end{proposition}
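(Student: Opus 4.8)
The plan is to show that when $\theta$ vanishes on $G$ the construction of \S2 collapses to ordinary Frobenius descent, so that the local maps $\tilde{\Phi}_{F_{\hat U}}$, restricted to $F_{U_0}^{*}G$, already assemble into the asserted global isomorphism. First I would record that, by Definition \ref{local associated bundle} and the gluing of \S2.2, over a small affine $U$ one has $H_{(G,0)}|_{U_0}=\tilde{\Phi}_{F_{\hat U}}(F_{U_0}^{*}G)=S_{U_0}(G)$, and since $\tilde{\Phi}_{F_{\hat U}}\colon F_{U_0}^{*}E_0\to H_0$ is an isomorphism, its restriction to $F_{U_0}^{*}G$ is an $\sO_{U_0}$-isomorphism onto $S_{U_0}(G)$. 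Next I would invoke the hypothesis $\theta|_{G}=0$: in Proposition \ref{taylor formula for tilde phi} and in the proof of Lemma \ref{taylor formula using higgs field over the inclusion of open sets}, every term on the right-hand side other than the leading one carries a factor $\theta_{\partial}^{\underline j}(e)$ with $|\underline j|\geq 1$, hence vanishes when $e$ is a local section of $G$. Consequently $\tilde{\Phi}_{F_{\hat U}}|_{F_{U_0}^{*}G}$ does not depend on the choice of Frobenius lifting on $\hat U$ and these local isomorphisms agree on overlaps; they glue to a global $\sO_{X_0}$-isomorphism $\tilde{\Phi}\colon F_{X_0}^{*}G\stackrel{\cong}{\longrightarrow}H_{(G,0)}$.

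It then remains to check that $\tilde{\Phi}$ is horizontal for $\nabla_{can}$ on the source and $\nabla$ on the target (the latter preserving $H_{(G,0)}$ by the horizontal property of \S2.3). Recall that $\nabla_{can}$ on $F_{X_0}^{*}G=F_{X_0}^{-1}G\otimes_{F_{X_0}^{-1}\sO_{X_0}}\sO_{X_0}$ is the integrable connection determined by requiring every section $e\otimes 1$, $e$ a local section of $G$, to be flat. On a small affine $U$, write such an $e$ as $e=\sum_i e_i$ with $e_i\in (E^{i,n-i})_0(U_0)$; applying Lemma \ref{commutation formula using higgs field} to each $e_i$ and summing yields
$$
\nabla_{\partial_{t_j}}\bigl[\tilde{\Phi}_{F_{\hat U}}(e\otimes 1)\bigr]=\tilde{\Phi}_{F_{\hat U}}\bigl[\theta_{\partial_{t_j}}(e)\otimes f_{j,0}\bigr]=0
$$
because $\theta|_{G}=0$; as the $\partial_{t_j}$ span $\textrm{Der}_{k}(\sO_{U_0},\sO_{U_0})$, the section $\tilde{\Phi}(e\otimes 1)$ is $\nabla$-flat. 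Thus $\tilde{\Phi}$ carries a local generating family of $\nabla_{can}$-flat sections of $F_{X_0}^{*}G$ to $\nabla$-flat sections of $H_{(G,0)}$, and for a general local section $\sum_k a_k(e_k\otimes 1)$ with $a_k\in\sO_{X_0}$ the Leibniz rule gives
$$
\nabla\bigl(\tilde{\Phi}(\textstyle\sum_k a_k\,e_k\otimes 1)\bigr)=\sum_k da_k\otimes\tilde{\Phi}(e_k\otimes 1)=\tilde{\Phi}\bigl(\nabla_{can}(\textstyle\sum_k a_k\,e_k\otimes 1)\bigr),
$$
so $\tilde{\Phi}$ intertwines the two connections and is therefore an isomorphism of vector bundles with integrable connection.

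I do not expect a serious obstacle: once $\theta|_{G}=0$ is used to annihilate the higher-order terms in the Taylor and commutation formulas of \S2, everything reduces to the standard Frobenius-descent picture. The only point requiring genuine care is the bookkeeping in the first paragraph — verifying that the operators $\tilde{\Phi}_{F_{\hat U}}|_{F_{U_0}^{*}G}$ are independent of the Frobenius lift and compatible on overlaps, so that the global $\tilde{\Phi}$ is well defined — together with pinning down the normalization of $\nabla_{can}$ so that "$e\otimes 1$ is flat" is precisely the property exploited in the application of Lemma \ref{commutation formula using higgs field}.
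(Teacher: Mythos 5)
Your proof is correct and follows essentially the same route as the paper: the well-definedness and bijectivity of $\tilde\Phi$ come from the local construction together with Proposition \ref{taylor formula for tilde phi} specialized to $\theta=0$, and the horizontality comes from Lemma \ref{commutation formula using higgs field} with $\theta=0$, which shows the sections $\tilde\Phi(e\otimes 1)$ form a flat (the paper says ``integrable'') basis. Your write-up simply makes explicit the gluing and Leibniz-rule bookkeeping that the paper leaves implicit.
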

\begin{proof}
This is a direct consequence of the construction of the subbundle
$H_{(G,0)}$ and the formula in Proposition \ref{taylor formula for
tilde phi} in the case of $\theta=0$. Note also that
$\{\tilde{\Phi}(e_i\otimes 1)\}$, where $\{e_i\}$ runs through a
local basis of $G$, makes an integrable basis of $S_{U_0}(G)$, which
follows directly from the formula in Lemma \ref{commutation formula
using higgs field} in the case of $\theta=0$.
\end{proof}
\subsection{Cartier-Katz descent}
Let $(H,Fil^{\cdot},\nabla,\Phi)$ be a geometric one, namely it
comes from Example \ref{geometric situation}. Then $H_0$ is equipped
with the conjugate filtration $0=F^{n+1}_{con}\subset
F^{n}_{con}\subset\cdots\subset F^0_{con}=H_0$, which is horizontal
with respect to the Gau{\ss}-Manin connection (see \S3 in
\cite{Ka}). For a subbundle $W\subset H_0$ we put
$Gr_{F_{con}}(W)=\bigoplus_{q=0}^{n} \frac{W\cap F_{con}^{q}}{W\cap
F_{con}^{q+1}}$. The $p$-curvature $\psi_{\nabla}$ of $\nabla$
defines the $F$-Higgs bundle
$$
\psi_{\nabla}:Gr_{F_{con}}(H_0)\to Gr_{F_{con}}(H_0)\otimes
F_{X_0}^*\Omega_{X_0}.
$$
As a reminder to the reader, we recall the definition of
\emph{$F$-Higgs bundle}: an $F$-Higgs bundle over a base $C$, which
is defined over $k$, is a pair $(E',\theta')$ where $E'$ is a vector
bundle over $C$, and $\theta'$ is a bundle morphism $E'\to E'\otimes
F_C^*\Omega_C$ with the integral property $\theta'\wedge\theta'=0$.
The following lemma is a simple consequence of Katz's $p$-curvature
formula (see Theorem 3.2 \cite{Ka1}).
\begin{lemma}[Lemma 7.2 \cite{SZZ} \footnote{The quoted lemma deals only with the weight one situation, but the proof works for an arbitrary weight.}] \label{grading of conjugate filtration}
Let $(W,\nabla)$ be a de Rham subbundle of $(H,\nabla)_0$. Then the
$F$-Higgs subbundle
$(Gr_{F_{con}}(W),\psi_{\nabla}|_{Gr_{F_{con}}(W)})$ defines a Higgs
subbundle of $(E,\theta)_0$ by the Cartier descent.
\end{lemma}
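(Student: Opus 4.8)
The engine of the proof is Katz's $p$-curvature formula (Theorem 3.2 \cite{Ka1}), which I will use in the following shape. The conjugate filtration $F^{\cdot}_{con}$ on $H_0$ is horizontal for the Gau{\ss}--Manin connection $\nabla$, so $\nabla$ induces a connection $\bar\nabla$ on $Gr_{F_{con}}(H_0)$; the formula says that $\psi_{\nabla}$ carries $F^{q}_{con}$ into $F^{q-1}_{con}\otimes F_{X_0}^{*}\Omega_{X_0}$, whence the $p$-curvature of $\bar\nabla$ vanishes and, under the Cartier descent of Theorem 5.1 \cite{Ka}, the pair $(Gr_{F_{con}}(H_0),\bar\nabla)$ descends to the Higgs bundle $(E,\theta)_0=Gr_{Fil}(H,\nabla)_0$, the $\sO_{X_0}$-linear, $\bar\nabla$-horizontal $F$-Higgs field $\psi_{\nabla}$ descending to the Hodge-theoretic field $\theta$. (The range $n\le p-2$ coming from Example \ref{geometric situation} is exactly what makes this descent available.) The plan is therefore to transport the de Rham subbundle $(W,\nabla)$ through this equivalence and then to saturate.

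First I would note that, since $W$ is $\nabla$-stable and each $F^{q}_{con}$ is $\nabla$-horizontal, the coherent subsheaves $W\cap F^{q}_{con}\subset H_0$ are $\nabla$-stable, so $Gr_{F_{con}}(W)=\bigoplus_{q}(W\cap F^{q}_{con})/(W\cap F^{q+1}_{con})$ is a $\bar\nabla$-stable coherent $\sO_{X_0}$-subsheaf of $Gr_{F_{con}}(H_0)$. As $\psi_{\nabla}$ is a polynomial in $\nabla$ it preserves any $\nabla$-stable subsheaf, hence preserves $W$ and thus $Gr_{F_{con}}(W)$; so $(Gr_{F_{con}}(W),\psi_{\nabla})$ is an $F$-Higgs subsheaf. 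Now $\bar\nabla$ restricted to $Gr_{F_{con}}(W)$ still has vanishing $p$-curvature, so Cartier descent applies to it and, being a functor, sends the inclusion $Gr_{F_{con}}(W)\hookrightarrow Gr_{F_{con}}(H_0)$ to an inclusion of coherent sheaves $G'\hookrightarrow E_0$; since $\psi_{\nabla}$ is $\bar\nabla$-horizontal it descends to a Higgs field on $G'$ compatible with $\theta$, i.e. $(G',\theta)\subset(E,\theta)_0$ is a Higgs subsheaf. Finally I would replace $G'$ by its saturation $G$ in $E_0$: the saturation of a $\theta$-stable coherent subsheaf of a Higgs bundle is again $\theta$-stable and is a subbundle, and $(G,\theta)$ is the asserted Higgs subbundle.

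The one step needing genuine care — and the natural main obstacle — is the bookkeeping hidden in ``descends'': an arbitrary $\nabla$-stable subbundle $W$ need not be strictly compatible with $F^{\cdot}_{con}$, so $Gr_{F_{con}}(W)$ is in general only a subsheaf, not a subbundle, of $Gr_{F_{con}}(H_0)$; this is precisely why the statement is phrased as ``defines'' a Higgs subbundle and why the final saturation is unavoidable. One must check that Cartier descent really does send this possibly non‑saturated flat subsheaf equipped with its horizontal $F$-Higgs field to a $\theta$-stable coherent subsheaf of $(E,\theta)_0$, and that passing to saturations is harmless throughout. This is routine but is the point where precision is required; everything else is either the cited $p$-curvature formula or a formal property of Cartier descent. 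The argument is the one recorded in Lemma 7.2 \cite{SZZ}.
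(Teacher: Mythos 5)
Your argument is correct and is essentially the proof the paper has in mind: the paper gives no proof of its own, simply citing Lemma 7.2 of \cite{SZZ} together with Katz's $p$-curvature formula, and that proof is exactly your combination of the horizontality of $F_{con}^{\cdot}$, the fact that $\psi_{\nabla}$ shifts $F_{con}^{\cdot}$ by one (so that $\bar\nabla$ has vanishing $p$-curvature and $\psi_{\nabla}$ is $\bar\nabla$-horizontal), and Cartier descent applied to the $\bar\nabla$-stable subsheaf $Gr_{F_{con}}(W)\subset Gr_{F_{con}}(H_0)\cong F_{X_0}^*E_0$. Two minor caveats: the bound $n\le p-2$ is needed for the $\mathcal{MF}^{\nabla}_{[0,n]}$ structure, not for Cartier descent itself; and your final saturation is better omitted, since a saturated $\theta$-stable subsheaf need not be locally free in dimension $\ge 3$ and the paper's later use of this lemma (e.g.\ the identification $Gr_{F_{con}}(H_{(G,\theta)})\cong F_{X_0}^*G$ and the ensuing slope computation) requires the descended sheaf itself, which is already a subbundle in those applications --- so ``Higgs subbundle'' here should be read with the same latitude the statement already grants to the ``$F$-Higgs subbundle'' $Gr_{F_{con}}(W)$.
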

We call the above Higgs subbundle the \emph{Cartier-Katz descent} of
$(W,\nabla)$. Back to the discussion on the associated de Rham
subbundle $(H_{(G,\theta)},\nabla)\subset (H,\nabla)_0$ with
$(G,\theta)\subset (E,\theta)_0$. After a terminology of Simpson
(see \cite{Si}) we shall call a Higgs subbundle $(G,\theta)$ with
the property $G=\oplus_{i=0}^{n}(G\cap (E^{i,n-i})_0)$ a
\emph{subsystem of Hodge bundles}. We have the following
\begin{proposition}\label{cartier-katz descent special case}
Let $(G,\theta)\subset (E,\theta)_0$ be a subsystem of Hodge
bundles. Then the Cartier-Katz descent of $(H_{(G,\theta)},\nabla)$
is equal to $(G,\theta)$.
\end{proposition}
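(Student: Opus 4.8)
The plan is to verify the equality of Higgs subbundles by a local computation with respect to a small affine $U$ and a fixed Frobenius lifting $F_{\hat U}$. By construction, $H_{(G,\theta)}$ is locally $\tilde\Phi_{F_{\hat U}}(G_{U_0})\subset H_0|_{U_0}$, and since $(G,\theta)$ is a subsystem of Hodge bundles, $G=\oplus_{i=0}^n G^i$ with $G^i=G\cap(E^{i,n-i})_0$; hence $S_{U_0}(G)=\sum_{i=0}^n \frac{\Phi_{F_{\hat U}}}{[p^i]}(F_{U_0}^*G^i)$. First I would identify the conjugate filtration on $H_0$ in terms of the local operators $\frac{\Phi_{F_{\hat U}}}{[p^i]}$: the image $\sum_{i\geq q}\frac{\Phi_{F_{\hat U}}}{[p^i]}(F_{U_0}^*(E^{i,n-i})_0)$ should be $F_{con}^{q}$ (this is exactly the content of the Cartier--Katz picture, i.e. Katz's $p$-curvature formula identifying the conjugate filtration with the image of the Hodge filtration under $\tilde\Phi$; cf. Theorem 3.2 \cite{Ka1} and Remark 3.19 of \cite{OV}). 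Granting this, $S_{U_0}(G)\cap F_{con}^q=\sum_{i\geq q}\frac{\Phi_{F_{\hat U}}}{[p^i]}(F_{U_0}^*G^i)$, so that the graded piece $Gr^q_{F_{con}}(S_{U_0}(G))$ is canonically $\frac{\Phi_{F_{\hat U}}}{[p^q]}(F_{U_0}^*G^q)$, which is $F_{U_0}^*(G^q)$ with its canonical connection by Proposition \ref{corollary on the canonical connection} applied locally.

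Next I would compute the $F$-Higgs field $\psi_\nabla$ on $Gr_{F_{con}}(S_{U_0}(G))$ and check it matches $\theta|_G$ under the Cartier descent. By Lemma \ref{commutation formula using higgs field}, for $e\in G^i(U_0)$ one has $\nabla_{\partial_{t_j}}[\frac{\Phi_{F_{\hat U}}}{[p^i]}(e\otimes1)]=\frac{\Phi_{F_{\hat U}}}{[p^{i-1}]}[\theta_{\partial_{t_j}}(e)\otimes f_{j,0}]$, and since $\theta_{\partial_{t_j}}(e)\in G^{i-1}$, this lands in the next step $F_{con}^{i-1}$ of the conjugate filtration; passing to $Gr_{F_{con}}$, the induced map is precisely $\theta_{\partial_{t_j}}\otimes f_{j,0}$ transported through the identifications above. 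Comparing with the definition of $\psi_\nabla$ as the $p$-curvature and recalling that $f_{j,0}=\frac{1}{p}\frac{\partial F_{\hat U}}{\partial t_j}\bmod p$ is exactly the factor appearing in Katz's $p$-curvature formula (the $p$-curvature of $\nabla_{can}$ on a Frobenius pullback involves $dF_{\hat U}/p$), one gets that $\psi_\nabla$ restricted to $Gr_{F_{con}}(S_{U_0}(G))$ is the $F$-linear operator whose Cartier descent is $\theta|_{G}$. Therefore the Cartier-Katz descent of $(H_{(G,\theta)},\nabla)$ recovers $G$ as a bundle with its Hodge grading and with Higgs field $\theta$, giving the claimed equality.

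The main obstacle I anticipate is the precise bookkeeping identifying the conjugate filtration $F^q_{con}$ with $\sum_{i\geq q}\frac{\Phi_{F_{\hat U}}}{[p^i]}(F_{U_0}^*(E^{i,n-i})_0)$ and, simultaneously, matching the normalization in Katz's $p$-curvature formula with the factor $f_{j,0}$ produced in Lemma \ref{commutation formula using higgs field}. Both are ``known to be classical'' (this is the geometric case of \cite{OV}, via \cite{Ka1}), but one must be careful that the same Frobenius lifting $F_{\hat U}$ is used throughout, that the identification $\frac{1}{[p^i]}$ is compatible across steps of the filtration, and that the sign/constant conventions for $p$-curvature match. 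Once these identifications are pinned down on a single small affine, the gluing is automatic since all constructions involved ($H_{(G,\theta)}$, the conjugate filtration, $\psi_\nabla$, the Cartier descent) are canonical, and Proposition \ref{corollary on the canonical connection} together with Lemma \ref{commutation formula using higgs field} do the rest with no further computation.
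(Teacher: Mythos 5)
Your first paragraph is essentially the paper's own argument: everything reduces to the (classical) fact that, over a small affine $U_0$, the composite of $\frac{\Phi_{F_{\hat U}}}{[p^i]}\colon F_{U_0}^*(E^{i,n-i})_0\to F_{con}^{n-i}H_0|_{U_0}$ with the projection onto $Gr_{F_{con}}^{n-i}H_0|_{U_0}$ is the inverse relative Cartier isomorphism, whence $\sC$ carries $Gr_{F_{con}}(H_{(G,\theta)})$ onto $F_{X_0}^*G$ inside $F_{X_0}^*E_0$. One bookkeeping correction: your indexing of the conjugate filtration is reversed. Since $\frac{\Phi_{F_{\hat U}}}{[p^i]}(F_{U_0}^*(E^{i,n-i})_0)$ lies in $F_{con}^{n-i}$, one has $F_{con}^{q}=\sum_{i\leq n-q}\frac{\Phi_{F_{\hat U}}}{[p^i]}(F_{U_0}^*(E^{i,n-i})_0)$, and $Gr^{q}_{F_{con}}$ corresponds to $E^{n-q,q}$, not $E^{q,n-q}$. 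This is harmless but should be fixed.

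Your second paragraph, however, contains a genuine error, fortunately in a step you do not need. Lemma \ref{commutation formula using higgs field} computes $\nabla_{\partial_{t_j}}$ itself, and what it shows is that $\nabla_{\partial_{t_j}}$ sends $\frac{\Phi_{F_{\hat U}}}{[p^i]}(e\otimes 1)\in F_{con}^{n-i}$ into $F_{con}^{n-i+1}$; on the associated graded the class of $\frac{\Phi_{F_{\hat U}}}{[p^i]}(e\otimes 1)$ is therefore \emph{horizontal} for the induced connection (consistent with Cartier descent), and the term $\theta_{\partial_{t_j}}(e)\otimes f_{j,0}$ you extract is the first graded piece of $\nabla$, an $\sO$-linear map $Gr^{q}\to Gr^{q+1}\otimes\Omega^1_{X_0}$ --- it is \emph{not} the $p$-curvature $\psi_{\nabla,\partial_{t_j}}=\nabla_{\partial_{t_j}}^{p}-\nabla_{\partial_{t_j}^{[p]}}$, which is a $p$-th order operator with values in $Gr^{q+1}\otimes F_{X_0}^*\Omega^1_{X_0}$. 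The identification of $\psi_\nabla$ on $Gr_{F_{con}}$ with $\theta$ is exactly Katz's theorem and cannot be read off from the first-order commutation formula. The repair is to drop this computation entirely: Lemma \ref{grading of conjugate filtration} already asserts that for any de Rham subbundle $W\subset(H,\nabla)_0$ the pair $(Gr_{F_{con}}(W),\psi_\nabla|_{Gr_{F_{con}}(W)})$ Cartier-descends to a Higgs subbundle of $(E,\theta)_0$, compatibly with the descent of the ambient $(Gr_{F_{con}}H_0,\psi_\nabla)$ to $(E,\theta)_0$. Hence once $\sC(Gr_{F_{con}}H_{(G,\theta)})=F_{X_0}^*G$ is established, the descended object is automatically $(G,\theta|_G)$, which is how the paper concludes.
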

\begin{proof}
First we recall that the relative Cartier isomorphism defines an
isomorphism
$$
\sC: Gr_{F_{con}}H_0\stackrel{\cong}{\longrightarrow} F_{X_0}^*E_0.
$$
We need to show that it induces an isomorphism
$$
Gr_{F_{con}}(H_{(G,\theta)})\cong F_{X_0}^*G.
$$
Write $G^{i,n-i}=G\cap (E^{i,n-i})_0$. Then
$G=\oplus_{i=1}^{n}G^{i,n-i}$. Now that over $U_0$ the composite
$$
F^*_{U_0}E^{i,n-i}_0|_{U_0}\stackrel{\frac{\Phi_{F_{\hat
U}}}{[p^i]}}{\longrightarrow}
F_{con}^{n-i}H_0|_{U_0}\twoheadrightarrow Gr_{Fon}^{n-i}H_0|_{U_0}
$$
is the inverse relative Cartier isomorphism $\sC^{-1}|_{U_0}$ over
$U_0$, it follows from the local construction of $H_{(G,\theta)}$
that
$$
\sC^{-1}|_{U_0}(F^*_{U_0}G^{i,n-i}|_{U_0})\stackrel{\cong}{\longrightarrow}
Gr_{Fon}^{n-i}H_{(G,\theta)}|_{U_0}.
$$
This implies the result.
\end{proof}
The above proof implies also the equalities $$H_{(G^{\leq
i},\theta)}=H_{(G,\theta)}\cap F_{con}^{n-i}, 0\leq i\leq n,$$ where
$G^{\leq i}$ is the Higgs subbundle $\oplus_{q\leq i}G^{q,n-q}$ of
$G$.
\begin{remark}\label{grading of hodge filtration}
The grading of $(H_{(G,\theta)},\nabla)$ with respect to the Hodge
filtration defines a Higgs subbundle of $(E,\theta)_0$ which is in
general not $(G,\theta)$. In the case that they are equal and $X$ is
proper over $W$, $(G,\theta)$ defines a $p$-torsion
subrepresentation of $\pi^{arith}_1(X^0)$, the \'{e}tale fundamental
group of the generic fiber $X^0$ of $X$, implied by a result of
Faltings (see Theorem 2.6* \cite{Fa}). A similar remark has appeared
in \S4.6 \cite{OV}.
\end{remark}

\section{Applications}
\subsection{Higgs semistability}
In this paragraph $X$ is assumed to be smooth and projective over
$W_{n+1}$ with connected closed fiber $X_0$ over $k$. Fix an ample
divisor $D$ on $X$. Recall that the $\mu$-slope of a torsion free
coherent sheaf $Z$ on $X_0$ is defined to be
$$
\mu(Z)=\frac{c_1(Z)\cdot D_0^{d-1}}{\rank Z}.
$$
\begin{proposition}\label{nonpositivity of subsystems of Hodge
bundles} Let $(E,\theta)$ be the associated Higgs bundle in the
geometric case, i.e. Example \ref{geometric situation}. Then the
following statements hold:
\begin{itemize}
    \item [i)] For any subsystem of Hodge bundle $(G,\theta)\subset (E,\theta)_0$,
one has $\mu(G)\leq 0$.
    \item [ii)] For any Higgs subbundle $G\subset
E_0$ with zero Higgs field, it holds that $\mu(G)\leq 0$.
\end{itemize}
\end{proposition}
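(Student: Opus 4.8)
The plan is to reduce both statements to the single principle that a Frobenius pull-back of a vector bundle on a smooth projective variety over $k$ has non-negative degree along curves, hence non-negative $\mu$-slope, combined with the construction of the associated de Rham subbundle and its relation to Cartier(-Katz) descent established above. Recall that for any coherent subsheaf $G$ of a bundle on $X_0$, the $p$-linear Frobenius $F_{X_0}$ satisfies $c_1(F_{X_0}^*G)=p\cdot F_{X_0}^*c_1(G)$, so $\mu(F_{X_0}^*G)=p^d\,\mu(G)$; thus it suffices to produce, from $G$, a subbundle of a suitable \emph{trivial-slope} ambient bundle whose slope controls $\mu(G)$ from above.

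First I would treat ii), the zero Higgs field case, since it is the cleanest. Given $(G,0)\subset(E,\theta)_0$, Proposition \ref{corollary on the canonical connection} gives a canonical isomorphism $F_{X_0}^*G\cong H_{(G,0)}$ as a subbundle of $(H,\nabla)_0$. Now $H_0$ is the de Rham bundle of a proper smooth family, so $c_1(H_0)=0$ (the Hodge numbers of the fibres are locally constant and the relative de Rham cohomology carries a flat connection; more elementarily $\det H_0$ carries an integrable connection, hence has $p$-curvature forcing its degree along any curve in $X_0$ to vanish — indeed a line bundle with a connection in characteristic $p$ is the Frobenius pull-back of a line bundle, so has degree divisible by $p$, and this iterates). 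Hence $\mu(H_0)=0$, and since $H_{(G,0)}$ is a subbundle, while a de Rham subbundle does not automatically have non-positive slope, the point is that $H_{(G,0)}\cong F_{X_0}^*G$ forces $\mu(H_{(G,0)})=p^d\mu(G)$; to get the inequality I would instead pass to the conjugate-filtration grading: $Gr_{F_{con}}(H_{(G,0)})$ is an $F$-Higgs subbundle of $Gr_{F_{con}}(H_0)\cong F_{X_0}^*E_0$ with the same $c_1$, and by Lemma \ref{grading of conjugate filtration} it descends to a genuine Higgs subsheaf $G'$ of $(E,\theta)_0$ with $\mu(F_{X_0}^*G')=\mu(Gr_{F_{con}}(H_{(G,0)}))=p^d\mu(G)$ (the canonical connection being compatible with the gradings). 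Since $(E,\theta)_0$ is the graded of the de Rham bundle and hence also has $c_1(E_0)=0$, it remains to bound $\mu(G')$ by $0$ — but here $G'\subset E_0$ is \emph{not} controlled unless we know $E_0$ is Higgs semistable, which is what we are after, so this route is circular.

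The correct route, and the one I would actually carry out, uses the construction in the other direction. For i), $(G,\theta)$ a subsystem of Hodge bundles, Proposition \ref{cartier-katz descent special case} says precisely that the Cartier-Katz descent of $(H_{(G,\theta)},\nabla)$ is $(G,\theta)$ itself; unwinding the proof, $\sC$ restricts to an isomorphism $Gr_{F_{con}}(H_{(G,\theta)})\cong F_{X_0}^*G$. Therefore $p^d\mu(G)=\mu(F_{X_0}^*G)=\mu(Gr_{F_{con}}(H_{(G,\theta)}))=\mu(H_{(G,\theta)})$, the last equality because taking associated graded of a \emph{sub}bundle of $H_0$ (with the induced filtration) preserves $c_1$ and rank. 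Now $H_{(G,\theta)}$ is a \emph{de Rham} subbundle of $(H,\nabla)_0$; I claim any de Rham subbundle $(W,\nabla)$ of $(H,\nabla)_0$ with $W$ of rank $r$ has $\mu(W)\le 0$. Indeed $\det W$ inherits an integrable connection from $\nabla$, so by Cartier descent $\det W\cong F_{X_0}^*L$ for a line bundle $L$ on $X_0$, giving $c_1(W)=p\,F_{X_0}^*c_1(L)$, i.e. $\mu(W)=p^d\mu(L)$; iterating, $\mu(W)$ is divisible by arbitrarily high powers of $p$, hence $c_1(W)\cdot D_0^{d-1}=0$. Wait — that gives $\mu(W)=0$, not merely $\le 0$, but that is even stronger and perfectly fine: we conclude $p^d\mu(G)=\mu(H_{(G,\theta)})=0$, so $\mu(G)\le 0$ (in fact $=0$). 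The same argument handles ii): there $G$ has zero Higgs field, $H_{(G,0)}\cong F_{X_0}^*G$ is a de Rham subbundle of $H_0$, so $p^d\mu(G)=\mu(H_{(G,0)})=0$ by the determinant-of-a-de-Rham-subbundle argument, whence $\mu(G)=0\le 0$.

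The main obstacle is the claim that a de Rham subbundle $W\subset H_0$ has $c_1(W)\cdot D_0^{d-1}=0$ — equivalently that a line bundle carrying an integrable connection in characteristic $p$ on a smooth projective $X_0$ is numerically trivial. This is where one must be careful: it is true because such a line bundle is a Frobenius pull-back (Cartier), so its numerical class is divisible by $p$, and iterating this divisibility finitely many times forces the intersection number $c_1(\det W)\cdot D_0^{d-1}$, an integer divisible by all powers of $p$, to vanish. I would state this as a short lemma, cite Cartier descent (Theorem 5.1 \cite{Ka}) for the Frobenius-pull-back fact, and then feed it into the two identifications $Gr_{F_{con}}(H_{(G,\theta)})\cong F_{X_0}^*G$ (Proposition \ref{cartier-katz descent special case}) and $H_{(G,0)}\cong F_{X_0}^*G$ (Proposition \ref{corollary on the canonical connection}) to finish both parts.
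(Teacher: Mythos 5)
Your second route, the one you actually carry out, breaks down at its key lemma: it is \emph{not} true that a de Rham subbundle $(W,\nabla)\subset (H,\nabla)_0$ has $\mu(W)=0$. Cartier descent (Theorem 5.1 \cite{Ka}) applies only to connections with \emph{vanishing $p$-curvature}; a line bundle carrying an integrable connection in characteristic $p$ need not be a Frobenius pull-back and need not be numerically trivial (for instance $F_{X_0}^*L\cong L^{\otimes p}$ always carries the canonical connection, and has degree $p\deg L$). The restriction of $\nabla$ to $\det W$ has no reason to be $p$-curvature free. A concrete counterexample to your lemma: for a nonisotrivial family of elliptic curves over a curve, the conjugate filtration $F^1_{con}\subset H_0$ is a horizontal line subbundle of degree $-p\deg E^{1,0}\neq 0$. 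Your conclusion $\mu(G)=0$ is also visibly too strong — already $G=(E^{0,n})_0$ is a subsystem of Hodge bundles and typically has strictly negative slope — which is a symptom of the same error. (Separately, $\mu(F_{X_0}^*G)=p\,\mu(G)$, not $p^d\mu(G)$: Frobenius multiplies $c_1$ by $p$ once, not once per dimension; this slip is harmless for signs but worth fixing.)

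Ironically, the route you discarded as ``circular'' is essentially the paper's proof, and it is rescued not by knowing semistability of $E_0$ in advance but by a maximality (Frobenius amplification) argument: suppose some subsystem of Hodge bundles has positive slope and choose one, $(G,\theta)$, of \emph{maximal} slope (the slopes of subsheaves of a fixed torsion-free sheaf are bounded above with bounded denominators, so the maximum is attained). The identification $Gr_{F_{con}}(H_{(G,\theta)})\cong F_{X_0}^*G$ from Proposition \ref{cartier-katz descent special case} gives $\mu(H_{(G,\theta)})=p\,\mu(G)$, and then — this is the step you are missing — Remark \ref{grading of hodge filtration} says that $Gr_{Fil}(H_{(G,\theta)},\nabla)$ is again a \emph{subsystem of Hodge bundles} of $(E,\theta)_0$ (Griffiths transversality makes the Hodge-graded of any de Rham subbundle into a graded Higgs subsheaf), of the same slope $p\,\mu(G)>\mu(G)$, contradicting maximality. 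Part ii) then follows by feeding $Gr_{Fil}(H_{(G,0)},\nabla)$, with $H_{(G,0)}\cong F_{X_0}^*G$ of slope $p\,\mu(G)>0$ from Proposition \ref{corollary on the canonical connection}, back into part i). So the fix is to replace your false ``slope-zero'' lemma by the comparison of $G$ against the new subsystem $Gr_{Fil}(H_{(G,\theta)})$ produced by the construction itself.
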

\begin{proof}
Assume that there exists a subsystem of Hodge bundles $(G,\theta)$
with positive $\mu$-slope in $(E,\theta)_0$. Take such one with the
largest slope. By the proof of Proposition \ref{cartier-katz descent
special case}, one has an isomorphism
$Gr_{F_{con}}H_{(G,\theta)}\cong F_{X_0}^*G$, and consequently the
equalities $\mu(H_{(G,\theta)})=\mu(F_{X_0}^*G)=p\mu(G)$. Then the
observation in Remark \ref{grading of hodge filtration} says that
$Gr_{Fil}(H_{(G,\theta)},\nabla)$ gives a subsystem of Hodge bundles
of $(E,\theta)_0$ of slope $p\mu(G)>\mu(G)$, a contradiction. Hence
i) follows. Now assume the existence of a Higgs subbundle $(G,0)$
with positive $\mu$-slope. By Corollary \ref{corollary on the
canonical connection}, the associated de Rham subbundle
$H_{(G,0)}\subset H_0$ is isomorphic to $F_{X_0}^*G$, whose
$\mu$-slope is equal to $p\mu(G)>0$. Then
$Gr_{Fil}(H_{(G,0)},\nabla)$ gives rise to a subsystem of Hodge
bundles with positive $\mu$-slope, which contradicts i).
\end{proof}
Let $C\subset X$ be a smooth projective curve over $W_{n+1}$. For a
coherent sheaf $Z$ over $X_0$, the $\mu_{C_0}$-slope of $Z$ is
defined to be $\frac{\deg(Z|_{C_0})}{\rank Z}$. Recall that a Higgs
bundle $(E,\theta)$ over $X_0$ is said to be Higgs semistable with
respect to the $\mu_{C_0}$-slope if for any Higgs subbundle
$(F,\theta)\subset(E,\theta)$ the inequality $\mu_{C_0}(F)\leq
\mu_{C_0}(E)$ holds.
\begin{lemma}\label{degree formula}
For any Higgs subbundle $(G,\theta)\subset (E,\theta)_0$, it holds
that $$\det C_0^{-1}(G,\theta)\cong(F_{X}^*\det G,\nabla_{can}).$$
\end{lemma}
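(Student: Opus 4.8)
The plan is to carry out the comparison directly on the line bundle $\det H_{(G,\theta)}=\wedge^{\rank G}H_{(G,\theta)}$, sitting inside $\wedge^{\rank G}H_0$, using the local descriptions of $H_{(G,\theta)}$ from \S\ref{section on construction}; here $\det$ of a de Rham bundle carries the induced connection, and I write $F_{X_0}$ for the absolute Frobenius of $X_0$. The decisive preliminary remark is that in the geometric case (Example \ref{geometric situation}) the Higgs field $\theta$ on $(E,\theta)_0=Gr_{Fil}(H,\nabla)_0$ is induced by $\nabla$ through Griffiths transversality, hence strictly lowers the Hodge degree: $\theta_{\partial_{t_j}}$ maps $(E^{i,n-i})_0$ into $(E^{i-1,n-i+1})_0$. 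Consequently $(\theta_{\partial_{t_j}})^{n+1}=0$ on $E_0$, a fortiori on $G$, so $\tr(\theta_{\partial_{t_j}}|_G)=0$; likewise every iterated operator $\theta_\partial^{\underline j}$ with $|\underline j|\ge 1$ strictly lowers the Hodge degree and is nilpotent on $G$.

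First I would produce the local isomorphism. Over a small affine $U_0\subset X_0$ with a Frobenius lifting $F_{\hat U}$ on $\hat U$, the map $\tilde\Phi_{F_{\hat U}}\colon F_{U_0}^*E_0\to H_0$ is an isomorphism carrying $F_{U_0}^*G$ onto $H_{(G,\theta)}|_{U_0}$ (Definition \ref{local associated bundle}), so $\wedge^{\rank G}\tilde\Phi_{F_{\hat U}}$ restricts to an isomorphism $F_{U_0}^*\det G\xrightarrow{\sim}\det H_{(G,\theta)}|_{U_0}$. I claim it intertwines $\nabla_{can}$ with $\nabla$. Choosing an $\sO_{U_0}$-frame $e_1,\dots,e_{\rank G}$ of $G|_{U_0}$ and setting $s_a:=\tilde\Phi_{F_{\hat U}}(e_a\otimes1)$, Lemma \ref{commutation formula using higgs field} gives $\nabla_{\partial_{t_j}}s_a=\tilde\Phi_{F_{\hat U}}(\theta_{\partial_{t_j}}(e_a)\otimes f_{j,0})$, so the connection matrix of $\partial_{t_j}$ in the frame $(s_a)$ is $f_{j,0}$ times the Frobenius pull-back of the matrix of $\theta_{\partial_{t_j}}|_G$; its trace is, up to a $p$-th power, $f_{j,0}\cdot\tr(\theta_{\partial_{t_j}}|_G)=0$, hence vanishes. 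Therefore $s_1\wedge\cdots\wedge s_{\rank G}=\wedge^{\rank G}\tilde\Phi_{F_{\hat U}}\big((e_1\wedge\cdots\wedge e_{\rank G})\otimes1\big)$ is a $\nabla$-flat frame of $\det H_{(G,\theta)}|_{U_0}$, while $(e_1\wedge\cdots\wedge e_{\rank G})\otimes1$ is a $\nabla_{can}$-flat frame of $F_{U_0}^*\det G$; since $\wedge^{\rank G}\tilde\Phi_{F_{\hat U}}$ matches them and is $\sO_{U_0}$-linear, it is horizontal.

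Next I would glue the local isomorphisms $\wedge^{\rank G}\tilde\Phi_{F_{\hat U}}$, which are canonical once the Frobenius lifting is fixed. On an overlap $U_0\cap V_0$, Proposition \ref{taylor formula for tilde phi} (and, for the passage between two small affines, Lemma \ref{taylor formula using higgs field over the inclusion of open sets} via the intermediate lifting on $\widehat{U\cap V}$) shows that $\tilde\Phi_{F_{\hat U}}$ and $\tilde\Phi_{F_{\hat V}}$ differ over $U_0\cap V_0$ by composition with an automorphism $M=\mathrm{id}+N$ of $F^*E_0$ preserving $F^*G$, where $N$ is an $\sO_{U_0\cap V_0}$-linear combination of the operators $\theta_\partial^{\underline j}|_G$ with $|\underline j|\ge1$. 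By the remark above $N$ is nilpotent, so $\det(M|_{F^*\det G})=\det(\mathrm{id}+N|_{F^*\det G})=1$; hence $\wedge^{\rank G}\tilde\Phi_{F_{\hat U}}$ and $\wedge^{\rank G}\tilde\Phi_{F_{\hat V}}$ agree on $U_0\cap V_0$. They therefore patch to a global isomorphism $(F_{X_0}^*\det G,\nabla_{can})\xrightarrow{\sim}(\det H_{(G,\theta)},\nabla)$, which is the assertion. Equivalently, the three preceding steps amount to saying the construction commutes with $\det$; since the induced Higgs field on $\det G$ is zero, Proposition \ref{corollary on the canonical connection} applied to $(\det G,0)$ then yields $(F_{X_0}^*\det G,\nabla_{can})$ directly. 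The logarithmic case is identical, with logarithmic Frobenius liftings, the logarithmic connection, and the Taylor formula of \S1.2 replacing their smooth counterparts.

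The real content is confined to the two nilpotence facts, $\tr(\theta_{\partial_{t_j}}|_G)=0$ and $\det(\mathrm{id}+N)=1$, both immediate from $\theta$ strictly lowering the Hodge degree; everything else is formal bookkeeping with \S\ref{section on construction}. I expect the only point demanding a little care to be checking that the change of Frobenius lifting contributes trivially to the determinant, i.e.\ that the Taylor change-of-trivialization $N$ is genuinely nilpotent \emph{on $G$} (which is why one restricts a nilpotent operator to an invariant subbundle rather than arguing on graded pieces); no transcendental ingredient enters.
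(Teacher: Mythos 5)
Your two key computations are correct, and the two facts you isolate --- the trace of a nilpotent endomorphism vanishes over the reduced base $X_0$, and a unipotent change of trivialization has determinant one --- are exactly the facts the paper's proof turns on. But be aware that the route is different in one substantive respect: the symbol $C_0^{-1}$ in the statement does not denote the construction $(G,\theta)\mapsto H_{(G,\theta)}$ of \S\ref{section on construction}; it is the inverse Cartier transform of Ogus--Vologodsky. The paper's proof is essentially a citation: by Proposition 5 of \cite{LSZ}, $C_0^{-1}(G,\theta)$ is the ``exponential twisting'' of $F_X^*G$, glued from the local flat bundles $(F_U^*G,\nabla_{can}+\frac{dF_{\hat U}}{p}F_U^*\theta)$ by transition functions $\exp[h_{UV}(F^*\theta)]\,F^*M_{UV}$; nilpotence of $\theta$ makes the local connection matrices trace-free and the exponential factors unipotent, so both die on passing to determinants. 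That is your pair of observations transplanted to a different local model. What you have actually proved is the analogous statement for $\det H_{(G,\theta)}$, using the trivializations $\tilde\Phi_{F_{\hat U}}$, Lemma \ref{commutation formula using higgs field} and Proposition \ref{taylor formula for tilde phi} in place of the description from \cite{LSZ}; every step there is sound, including the point you rightly flag, namely that the Taylor correction $N$ must be seen to be nilpotent on the $\theta$-stable subbundle $G$ itself and not merely on graded pieces.

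So strictly speaking you have computed the determinant of a different functor, and the paper nowhere identifies $H_{(G,\theta)}$ with $C_0^{-1}(G,\theta)$ inside $H_0$ (the introduction explicitly leaves that comparison open). This is a mismatch of objects rather than a mathematical error, and it is harmless for the paper's purposes: the only use of the lemma is the degree count $\deg Gr_{Fil}(\,\cdot\,)|_{C_0}=p\deg G|_{C_0}$ in the following proposition, which requires only \emph{some} de Rham subbundle of $(H,\nabla)_0$ of degree $p\deg G$, and your $H_{(G,\theta)}$ supplies one directly and self-containedly, with no input from \cite{LSZ} or Ogus--Vologodsky. Just state explicitly that you are replacing $C_0^{-1}$ by the construction of \S\ref{section on construction} rather than computing the determinant of the Ogus--Vologodsky transform itself.
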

\begin{proof}
The functor $C_0^{-1}$ has been further studied in the paper
\cite{LSZ}. It follows from Proposition 5 loc. cit. that
$C_0^{-1}(G,\theta)$ is isomorphic to the exponential twisting of
$F_X^{*}G$. Precisely, it is obtained by gluing the local flat
bundles $(F_{U}^*G|_{U},\nabla_{can}+\frac{dF_{\hat
U}}{p}F_U^*\theta|_U)$ via the gluing functions
$\exp[h_{UV}(F_{U\cap V}^*\theta)]F_{U\cap V}^*M_{U\cap V}$. Here
$U,V$ are two open subsets of $X$, $M_{U\cap V}$ is the transition
function of two local bases of $G$ over $U\cap V$, $h_{UV}$ is the
derivation measuring the difference of two Frobenius liftings. Due
to the fact that $\theta$ is nilpotent, the determinant of the
exponential twisting is simply the identity. Therefore $\det
C_0^{-1}(G,\theta)$ is isomorphic to $(F_{X}^*\det G,\nabla_{can})$.
\end{proof}
\begin{proposition}
Let $(E,\theta)$ be the associated Higgs bundle to an object in
$\mathcal{MF}_{[0,n]}^{\nabla}(X)$. Then the following statements
hold:
\begin{itemize}
    \item [i)] Let $g_0: C_0\to X_0$ be a morphism from a smooth projective curve $C_0$ to $X_0$ over $k$ which is liftable to a
morphism $g: C\to X$ over $W_{n+1}$. Then for any Higgs subbundle
$(G,\theta)\subset (E,\theta)_0$, one has $\deg(g_0^*G)\leq 0$.
    \item [ii)] Let $C$ be a smooth projective curve in $X$ over $W_{n+1}$. The Higgs bundle $(E,\theta)_0$ is
Higgs semistable with respect to the $\mu_{C_0}$-slope.
\end{itemize}
\end{proposition}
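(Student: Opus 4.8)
The plan is to deduce i) by an iteration argument resting on the construction of \S\ref{section on construction}, and then to obtain ii) formally. \emph{Reduction of i) to a curve.} By Proposition \ref{basic properties} i) (and functoriality of $\mathcal{MF}^{\nabla}_{[0,n]}$) I would replace $(H,Fil^{\cdot},\nabla,\Phi)$ by its pull-back $g^{*}(H,Fil^{\cdot},\nabla,\Phi)\in\mathcal{MF}^{\nabla}_{[0,n]}(C)$ and $(G,\theta)$ by $g_{0}^{*}(G,\theta)\subset g_{0}^{*}(E,\theta)_{0}=Gr_{Fil}(g^{*}H,g^{*}\nabla)_{0}$, which reduces i) to the following: if $X=C$ is a smooth projective curve over $W_{n+1}$ and $(G,\theta)\subset(E,\theta)_{0}$ is a Higgs subbundle, then $\deg G\le 0$.

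\emph{The degree formula and the iteration.} The engine is the identity $\deg H_{(G,\theta)}=p\deg G$ for the associated de Rham subbundle. Indeed, locally $H_{(G,\theta)}|_{U_{0}}=\tilde{\Phi}_{F_{\hat U}}(F_{U_{0}}^{*}G)$, and by the Taylor formula of Proposition \ref{taylor formula for tilde phi} the bundle $H_{(G,\theta)}$ is obtained from $F_{X_{0}}^{*}G$ by re-gluing through automorphisms of the shape $\id+N$ with $N$ nilpotent (the correction term in that formula, which is nilpotent since $\theta$ is nilpotent); taking determinants kills $N$, so $\det H_{(G,\theta)}\cong F_{X_{0}}^{*}\det G$, exactly as in the proof of Lemma \ref{degree formula}. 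Now set $(G_{1},\theta):=Gr_{Fil}(H_{(G,\theta)},\nabla)$: by Remark \ref{grading of hodge filtration} it is again a Higgs subbundle of $(E,\theta)_{0}$, of the same rank as $G$ (since $H_{(G,\theta)}$ is locally isomorphic to $F_{U_{0}}^{*}G$), and $\deg G_{1}=\deg H_{(G,\theta)}=p\deg G$ because passing to the Hodge-graded preserves the degree. Iterating produces Higgs subbundles $(G_{m},\theta)\subset(E,\theta)_{0}$, all of rank $r:=\rank G$, with $\deg G_{m}=p^{m}\deg G$ for every $m\ge 0$. But the degrees of rank-$r$ subbundles of the fixed bundle $E_{0}$ on the projective curve $X_{0}$ are bounded above (e.g.\ $\deg G_{m}=\deg\det G_{m}\le\mu_{\max}(\wedge^{r}E_{0})$), so $\deg G>0$ would be contradicted for $m\gg 0$; hence $\deg G\le 0$, proving i).

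\emph{Deduction of ii).} Apply i) to the inclusion $g_{0}\colon C_{0}\hookrightarrow X_{0}$, which lifts to the given $C\hookrightarrow X$ over $W_{n+1}$: every Higgs subbundle $(F,\theta)\subset(E,\theta)_{0}$ satisfies $\deg(F|_{C_{0}})\le 0$, i.e.\ $\mu_{C_{0}}(F)\le 0$. It remains to check $\mu_{C_{0}}((E,\theta)_{0})=0$: the degree formula applied with $G=E_{0}$ (so the associated de Rham subbundle is all of $H_{0}$) gives $\det H_{0}\cong F_{X_{0}}^{*}\det E_{0}$, while $\det H_{0}\cong\det Gr_{Fil}H_{0}=\det E_{0}$; restricting the resulting isomorphism $\det E_{0}\cong F_{X_{0}}^{*}\det E_{0}$ to $C_{0}$ and using $\deg F_{C_{0}}^{*}(-)=p\deg(-)$ forces $\deg(E_{0}|_{C_{0}})=0$. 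Hence $\mu_{C_{0}}(F)\le 0=\mu_{C_{0}}((E,\theta)_{0})$ for all Higgs subbundles, i.e.\ $(E,\theta)_{0}$ is Higgs semistable with respect to the $\mu_{C_{0}}$-slope.

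The delicate part is entirely concentrated in the two inputs to the iteration: the degree formula $\det H_{(G,\theta)}\cong F_{X_{0}}^{*}\det G$ and the assertion of Remark \ref{grading of hodge filtration} that the Hodge-graded of $H_{(G,\theta)}$ is again a $\theta$-stable subbundle of $(E,\theta)_{0}$ of the same rank. Together these force the degree to be multiplied by exactly $p$ at each step, which drives the whole argument; both come down to unwinding the gluing and the Taylor formula of \S\ref{section on construction} and exploiting the nilpotence of $\theta$. Granting them, the remainder is just the standard ``boundedness versus Frobenius'' contradiction.
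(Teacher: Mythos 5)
Your proof is correct and follows essentially the same route as the paper: both arguments iterate the operation ``pass to the associated flat subbundle, whose determinant is the Frobenius pull-back of $\det G$ because the nilpotence of $\theta$ makes the gluing corrections unipotent, then take the Hodge graded,'' so that the degree is multiplied by $p$ at each step, and both conclude from the boundedness of degrees of fixed-rank subsheaves of $E_0|_{C_0}$. The only differences are cosmetic: the paper packages the determinant identity as Lemma~\ref{degree formula} in terms of the inverse Cartier transform $C_0^{-1}$ (citing \cite{LSZ}) rather than deriving it directly from the Taylor formula for $H_{(G,\theta)}$ as you do, and for ii) it only says the argument is ``similar,'' whereas your explicit reduction of ii) to i) via $\deg(E_0|_{C_0})=0$ is a clean way of making that precise.
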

\begin{proof}
Let $(E,\theta)|_{C}$ be the Higgs bundle pulled back via $g$. We
use similar notions for the pull-backs via $g_0$. The pull-back of
$H$ via $g$ gives an object in $\mathcal{MF}^{\nabla}_{[0,n]}(C)$.
Assume the existence of a Higgs subbundle $(G,\theta)\subset
(E,\theta)_0$ satisfying $\deg(G|_{C_0})>0$. By Lemma \ref{degree
formula}, it holds that
$$
\deg Gr_{Fil}C_0^{-1}(G,\theta)|_{C_0}= \deg
C_0^{-1}(G,\theta)|_{C_0}= \deg F_{C_0^{*}}G|_{C_0}
    = p\deg G|_{C_0}.$$
Iterating this process, one obtains Higgs subbundles in $E_0|_{C_0}$
with arbitrary large degrees, which is impossible. Hence the result
i) follows. The proof of ii) is similar by replacing the degree in
the previous argument with the $\mu_{C_0}$-slope.
\end{proof}

\begin{remark}
In the above result i), it is natural to make the liftability
assumption on $C_0$. The example of Moret-Bailly (see \cite{Mo})
shows that over a $W_2$-unliftable curve in the moduli space of
principal polarized abelian surfaces in characteristic $p$ the Higgs
bundle of the restricted universal family contains a Higgs line
bundle with positive degree. In the case $X$ being a curve, the assumption on
$p$ made in Proposition 4.19 \cite{OV} reads $n(\rank
E-1)\max\{2g-2,1\}\leq p-2$ where $g$ is the genus of $X_0$. The
above result ii) removes the dependence of $p$ on the genus as well
as the rank. 
\end{remark}

\subsection{$W_2$-unliftability}
Let $F$ be a real quadratic field with the ring of integers $\sO_F$.
Assume $p$ is inert in $F$. Fix an integer $m\geq 3$, coprime to
$p$. Let $M$ be the moduli scheme over $W$, and $S_h$ the Hasse
locus of $M_{0}$ as described in the introduction. Let $Z_0\subset
S_h$ be a curve with simple normal crossing. It is said to be
$W_2$-liftable inside $M_{1}$ if there exists a semistable curve
$Z_1\subset M_{1}$ over $W_2$ such that its closed fiber is $Z_0$.
In the following the $W_2$-liftability means always the
$W_2$-liftability inside $M_{1}$. It is interesting to know whether
the components in $S_h$ are liftable to $W_2$. The $W_2$-liftability
on the whole or part of the $\P^1$-configuration $S_h$ is in fact a
subtle problem. On the one hand, it shall be more or less well known
that each component of $S_h$ is $W_2$-unliftable. On the other hand,
a result of Goren (see Theorem 2.1 \cite{G}) implies that the whole
configuration is $W_2$-liftable if the zeta value $\zeta_{F}(2-p)$
is a non $p$-adic integer. Our partial result on this question is
the following
\begin{proposition}\label{thm 3}
Let $D$ be a component in $S_h$. Let $C=\sum C_i$ (may be empty) be
a simple normal crossing divisor in $M_{0}$ such that $D+C$ is again
a simple normal crossing divisor. If $D\cdot C\leq p-1$, then the
curve $D+C$ is $W_2$-unliftable.
\end{proposition}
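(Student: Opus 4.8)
The plan is to argue by contradiction: a hypothetical $W_2$-lift of $D+C$ is fed into the semistable variant of the construction of \S\ref{section on construction}, and the resulting multiplication-of-degree-by-$p$ mechanism (as in Lemma~\ref{degree formula} and the proof of Proposition~\ref{nonpositivity of subsystems of Hodge bundles}) is contradicted by a positive Higgs subsheaf coming from the Hasse locus.

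Suppose $Z_0:=D+C$ were $W_2$-liftable inside $M_1$, say by a semistable curve $Z_1\subset M_1$ over $W_2$ with closed fibre $Z_0$. Restricting the universal abelian surface along $Z_1\hookrightarrow M$ gives a proper smooth (hence, with the induced logarithmic structures, log smooth) morphism $f: A_{Z_1}\to Z_1$ with torsion free relative Hodge cohomology, so $R^1f_*(\sO_{A_{Z_1}},d)$ is an object $(H,Fil^{\cdot},\nabla,\Phi)\in\mathcal{MF}^{\nabla}_{[0,1]}(Z_1)$ in the semistable sense of \S\ref{section on faltings category}; as the weight $1$ is at most $p-2$, all of \S\ref{section on construction} applies over $Z_1$. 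Put $(E,\theta)_0=Gr_{Fil}(H,\nabla)_0$, a logarithmic Higgs bundle on the nodal curve $Z_0$. By the semistable variant of the construction, each $\theta$-stable coherent subsheaf $(G,\theta)\subset(E,\theta)_0$ yields a logarithmic de Rham subsheaf $(H_{(G,\theta)},\nabla)\subset(H,\nabla)_0$; the isomorphisms $\tilde\Phi_{F_{\hat U}}$ identify it locally with the Frobenius pull-back of $G$, and (exactly as in the proof of Lemma~\ref{degree formula}) the global gluing differs from $F^*_{Z_0}G$ by an exponential twisting of the nilpotent Higgs field whose determinant is trivial, so $\deg H_{(G,\theta)}=p\deg G$; consequently $Gr_{Fil}(H_{(G,\theta)},\nabla)$ is again a $\theta$-stable subsheaf of $(E,\theta)_0$ (Remark~\ref{grading of hodge filtration}), of degree $p\deg G$. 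Since $(E,\theta)_0$ is a fixed coherent sheaf on the complete curve $Z_0$, the degrees of its $\theta$-stable subsheaves are bounded above, so the existence of even one $\theta$-stable subsheaf of strictly positive degree would, upon iteration, produce subsheaves of unbounded degree --- absurd.

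It remains to exhibit, by characteristic $p$ geometry alone, a $\theta$-stable coherent subsheaf $(G,\theta)\subset(E,\theta)_0$ on $Z_0$ with $\deg G>0$; here the hypotheses on $D$ and on $D\cdot C$ are used. Since $D$ is a component of the Hasse locus $S_h$, the universal surface degenerates to a non-ordinary one along $D$ and the partial Hasse invariant vanishes identically on $D$; combining this with the classical description of the Hodge bundle and of the Kodaira--Spencer map on the Hilbert modular surface, and with the self-intersection of $D$ in $M_0$ computed in \cite{BG}, one produces a sub-line-bundle $\sL\subset E_0|_D$ generating a $\theta$-stable subsheaf of $(E,\theta)_0|_D$ of degree $p$. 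Extending it from $D$ across the intersection points $D\cap C$ to a $\theta$-stable coherent subsheaf $(G,\theta)$ of $(E,\theta)_0$ on all of $Z_0$ costs in total at most $D\cdot C$, whence $\deg G\geq p-D\cdot C\geq 1>0$ precisely when $D\cdot C\leq p-1$; and the same inequality keeps every local exponent occurring in the logarithmic Taylor formulae of \S\ref{section on faltings category} at most $p-2$, so that the construction of \S\ref{section on construction} is indeed legitimate for this $(G,\theta)$. Together with the previous paragraph this is the desired contradiction, and $D+C$ is $W_2$-unliftable inside $M_1$.

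The main obstacle is the last step: isolating the correct positive sub-line-bundle $\sL$ attached to the vanishing of the partial Hasse invariant along $D$ and computing its degree from the Bachmat--Goren geometry of $S_h$, and carrying out the logarithmic/semistable bookkeeping on the nodal curve $Z_1$ (the log structure at the nodes, the logarithmic Taylor formula, and the extension of $\sL$ across $D\cap C$) with enough care that the numerical threshold comes out precisely as $D\cdot C\leq p-1$.
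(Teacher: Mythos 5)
Your overall architecture matches the paper's: assume a semistable $W_2$-lift $Z\subset M_1$ of $Z_0=D+C$ exists, restrict the universal family to get an object of $\mathcal{MF}^{\nabla}_{[0,1]}(Z)$ in the semistable sense, and feed a positive Higgs subsheaf into the construction of \S2. You also identify the right source of positivity: it is $\sL_2|_D\simeq\sO_{\P^1}(p)$ (the eigen-line-bundle on which $\theta$ vanishes for degree reasons along the Hasse component), twisted by $\sO_D(-D\cap C)$ so that it glues with the zero sheaf on $C$ into a Higgs subsheaf $(\sL,0)\subset(F,\eta)$ of degree $p-D\cdot C$ on $D$. Up to this point you are reconstructing the paper's proof.

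The gap is in your endgame. You replace the paper's argument by the iteration/unbounded-degree mechanism of \S3.1, whose engine is the identity $\deg H_{(G,\theta)}=p\deg G$ applied repeatedly to coherent Higgs subsheaves of $(E,\theta)_0$ \emph{on the reducible nodal curve} $Z_0$. That identity is only established in the paper (Lemma 3.4, via \cite{LSZ}) for $C_0^{-1}$ on a smooth scheme; for sheaves like $\sL$, which are supported on a single component and are not locally free at the nodes, both the meaning of ``degree'' and the behaviour of the local operators $\Phi/[p^i]$ through the logarithmic PD-envelopes at the nodes require real work, and each pass through a node costs twists that your bookkeeping does not track. The paper deliberately avoids iterating: it applies the construction \emph{once}, records $\deg H_{(\sL,0)}|_D=p(p-D\cdot C)$, and then uses the Hodge filtration sequence $0\to\sL_1|_D\oplus\sL_2|_D\to H|_D\to\sL_1^{-1}|_D\oplus\sL_2^{-1}|_D\to 0$ together with the splitting type $(-1,p,1,-p)$ on $D\simeq\P^1$ to force $H_{(\sL,0)}|_D\subset\sL_1|_D\oplus\sL_2|_D$, which is numerically impossible when $D\cdot C\leq p-2$. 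Crucially, in the borderline case $D\cdot C=p-1$ the degree count only yields $H_{(\sL,0)}|_D=\sL_2|_D$, and the paper must invoke a genuinely new input to exclude this: the relative Frobenius is semilinear over $\sO_F\otimes W$ and, since $p$ is inert in $F$, the operator $\Phi_{F_{\hat U}}/[p]$ sends sections of $\sL_2|_D$ into the \emph{other} eigen-component $\sL_1$-part of $H$, so $H_{(\sL,0)}$ cannot sit inside $\sL_2|_D$. Your sketch contains no trace of this eigenspace-swapping argument, and the case $D\cdot C=p-1$ --- exactly where the numerical threshold of the proposition is attained --- is precisely where any leak in the nodal degree bookkeeping would make your iteration stall. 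As written, the proposal does not close the proof.
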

Before giving the proof, we introduce several notations. Let $f:
X\to M$ be the universal abelian scheme. Let $(E=E^{1,0}\oplus
E^{0,1},\theta)$ be the Higgs bundle of $f$, where
$E^{1,0}=f_*\Omega^1_{X|M}$ and $E^{0,1}=R^1f_*\sO_{X}$. It is known
that $(E,\theta)$ has a decomposition under the $\sO_F\otimes
W$-action in the form
$$
(E,\theta)=(E_1,\theta_1)\oplus (E_2,\theta_2),
$$
where $E^{1,0}_i=\sL_i$ and $E^{0,1}_i=\sL_i^{-1}$ for $i=1,2$. It
is also known that either $\sL_1|_{D}\simeq \sO_{\P^1}(-1), \
\sL_2|_{D}\simeq \sO_{\P^1}(p)$ or $\sL_1|_{D}\simeq \sO_{\P^1}(p),
\ \sL_2|_{D}\simeq \sO_{\P^1}(-1)$ holds for any component $D$ in
$S_h$. One has a description of the Higgs field of the Higgs bundle
associated to the restricted universal family to $D$: In the former
case, $\theta_2: \sL_2|_{D}\to \sL_2^{-1}|_{D}\otimes \Omega^1_{D}$
is zero for the reason of degree, and $\theta_1: \sL_1|_{D}\to
\sL_1^{-1}|_{D}\otimes \Omega^1_{D}$ can be shown to be an
isomorphism (we shall not use this fact in the following argument).
The properties of $\theta_1$ and $\theta_2$ are exactly exchanged in
the latter case. Put the log structure on $Z_{0}=D+C$ by its
components and the trivial log structure on $\Spec k$. Let
$\Omega^1_{\log}(Z_{0}/k)$ be the sheaf of log differentials (see
(1.7) \cite{K}). It is locally free $\sO_{Z_0}$-module of rank one.
The family $f_0$ restricts to a family $f_0: Y_0\to Z_{0}$. With the
pull-back logarithmic structure on $Y_0$, $f_0$ is log smooth. So
one forms the logarithmic de Rham bundle $(H,\nabla)$ of $f_0$,
which is by definition the first hypercohomology of the relative
logarithmic de Rham complex. The relative Hodge filtration on the
complex degenerates at $E_1$, thus one forms the logarithmic Higgs
bundle over $Z_0$:
$$
\eta: F\to F\otimes \Omega^1_{\log}(Z_0/k),
$$
where $F=F^{1,0}\oplus F^{0,1}$ with $F^{1,0}=\sL_1|_{Z_0}\oplus
\sL_2|_{Z_0}$ and $F^{0,1}=\sL_{1}^{-1}|_{Z_0}\oplus
\sL_2^{-1}|_{Z_0}$.
\begin{proof}
Now we assume $Z_0$ lifts a semistable curve $Z\subset M_{1}$
over $W_2$. We equip $Z$ with the log structure determined by the
divisor $Z_0$ and $\Spec W_2$ with the one by $\Spec k$. We can
assume that $$\sL_1|_{D}\simeq \sO_{\P^1}(-1), \ \sL_2|_{D}\simeq
\sO_{\P^1}(p).$$ Over the open subset $D-D\cap C$, $\eta|_{D}$
coincides with the Higgs bundle coming from the restricted universal
family to $D$. So by the above discussion, $\eta|_D(\sL_2|_{D})=0$.
Consider the following coherent subsheaf in $F$: Take the subsheaf
$$
\sL_2|_{D}\otimes \sO_{D}(-D\cap C)\subset \sL_2|_{D}
$$
over $D$ and take the zero sheaf over $C$, considered as the
subsheaf of $\sL_2|_{C}$. They glue into a subsheaf $\sL$ of
$\sL_2|_{Z_0}\subset F$ over $Z_0$ since over a small open
neighborhood of any point $P\in D\cap C$, $\sL_2|_{D}\otimes
\sO_{D}(-D\cap C)$ has a local basis vanishing at $P$. Note that
$\sL$ is a Higgs subsheaf of $F$. In fact the Higgs field $\eta$
acts on $\sL$ trivially by construction. Then the construction of
\S2 in the semistable case applies. So $(\sL,0)\subset (F,\eta)$
gives rise to a de Rham subsheaf $H_{(\sL,0)}\subset (H,\nabla)$.
Note that $H_{(\sL,0)}|_{D}$ is isomorphic to
$F_{D}^*(\sL_2|_{D}\otimes \sO_{D}(-D\cap C))$ and hence has degree
$p(p-D\cap C)$. We have a short exact sequence from the Hodge
filtration:
$$
0\to \sL_1|_{D}\oplus \sL_2|_{D}\to H|_{D} \to \sL_1^{-1}|_{D}\oplus
\sL_2^{-1}|_{D}\to 0.
$$
Then we first consider the composite
$$H_{(\sL,0)}|_{D}\subset
H|_{D}\twoheadrightarrow \sL_1^{-1}|_{D}\oplus \sL_2^{-1}|_{D}.
$$
As $\deg \sL_1^{-1}|_{D} =1$ and $\deg \sL_2^{-1}|_{D} =-p$ are both
smaller than $\deg H_{(\sL,0)}|_{D}$, one has the factorization
$$
H_{(\sL,0)}|_{D}\subset \sL_1|_{D}\oplus \sL_2|_{D}\subset H|_{D}.
$$
{\bf Case 1:} $D\cap C\leq p-2$. Again for the reason of degree, the
above nontrivial map is impossible.

{\bf Case 2:} $D\cap C= p-1$. In this case, one obtains the equality
$$
H_{(\sL,0)}|_{D}=\sL_2|_{D}.
$$
This is also impossible because of the semilinearity of the relative
Frobenius. For a small affine $U\subset Z$ whose modulo $p$
reduction is $U_0\subset D-D\cap C$ and a Frobenius lifting $F_{\hat
U}$, the local operator $\frac{\Phi_{F_{\hat U}}}{[p]}$ maps a local
section in $\sL_2|_{D}(U_0)$ to a local section in
$\sL_1|_{D}(U_0)$. As $\sL_2|_{D}\otimes \sO_{D}(-D\cap C)\subset
\sL_2|_{D}$, it is impossible to have $H_{(\sL,0)}(U_0)\subset
\sL_2|_{D}(U_0)$ by the construction of $H_{(\sL,0)}$.
\end{proof}

\end{document}